\small\color{gray}\ttfamily,
\ttfamily\color{BrickRed},
\ttfamily\color{MidnightBlue}, %\bfseries,
\newcommand{\RR}{\mathbb{R}}    % real numbers                      
\newcommand{\F}{\mathcal{F}}    %face lattice
\newcommand{\Tsymb}{\top}
\newcommand{\T}{^{\Tsymb}}
\def\^#1{^{(#1)}}
\def\s^#1{^{\smash{(#1)}}}
\newcommand*{\horzbar}{\rule[.5ex]{2.5ex}{0.4pt}}
\def\:{\colon}
\newcommand{\mylabel}{\upshape(\textit{\roman*})}
\newenvironment{myenumerate}{\begin{enumerate}[label=\mylabel]}{\end{enumerate}}
\newcommand{\freespace}{\kern.07em} 
\newcommand{\free}{\freespace\cdot\freespace} 
\newcommand{\enquote}[1]{``#1''}                                 % quotation marks
\newcommand{\ul}[1]{\underline{\smash{#1}}}
\newcommand{\msays}[1]{{\footnotesize\textcolor{red}{#1}}}
\newcommand{\TODO}{\msays{TODO}}
\theoremstyle{plain}  % theorem, lemma, corollary, proposition, conjecture, criterion, algorithm
\newtheorem{theorem}{Theorem}[section]
\newtheorem{corollary}[theorem]{Corollary}
\newtheorem{lemma}[theorem]{Lemma}
\newtheorem{proposition}[theorem]{Proposition}
\theoremstyle{definition} % definition, condition, problem, example
\newtheorem{definition}[theorem]{Definition}
\newtheorem{example}[theorem]{Example}
\newtheorem{remark}[theorem]{Remark}
\newtheorem{question}[theorem]{Question}
\newtheorem{observation}[theorem]{Observation}
\crefname{theorem}{Theorem}{Theorems}
\crefname{proposition}{Proposition}{Propositions}
\crefname{lemma}{Lemma}{Lemmas}
\crefname{corollary}{Corollary}{Corollaries}
\crefname{remark}{Remark}{Remarks}
\crefname{example}{Example}{Examples}
\crefname{definition}{Definition}{Definitions}
\crefname{problem}{Problem}{Problems}
\crefname{observation}{Observation}{Observation}
\crefname{construction}{Construction}{Construction}
\DeclareMathOperator{\conv}{conv}
\DeclareMathOperator{\Aut}{Aut}
\DeclareMathOperator{\diam}{diam}
\DeclareMathOperator{\Ortho}{O}
\DeclareMathOperator{\GL}{GL}
\DeclareMathOperator{\Span}{span}
\DeclareMathOperator{\rank}{rank}
\DeclareMathOperator{\Id}{Id}
\DeclareMathOperator{\dist}{dist}
\DeclareMathOperator{\Eig}{Eig}
\DeclareMathOperator{\Spec}{Spec}
\DeclareMathOperator{\Sym}{Sym}
\DeclareMathOperator{\Perm}{Perm}   	% permutation matrices
\DeclareMathOperator*{\Argmax}{Argmax}   	
\DeclareMathOperator{\vol}{vol}
\let\x=\times
\def\...{...}
\newcommand{\shortStyle}{\textit}
\newcommand{\ie}{\shortStyle{i.e.,}}
\newcommand{\eg}{\shortStyle{e.g.}}
\newcommand{\wrt}{\shortStyle{w.r.t.}}
\newcommand{\resp}{resp.}
\let\angle=\measuredangle
\renewcommand*{\eqref}[1]{%
  \hyperref[{#1}]{\textup{\tagform@{\ref*{#1}}}}%
}
\numberwithin{equation}{section}
\newcommand{\tempnewpage}{}
\begin{document}

%%%%%%%%%%%%%%%%%%%%%%%%%%%%%%%%%%%%%%%%%%%%%%%%%%%%%%%%%%%%%%%%%%%%%%%%%%%%%%%%%%%%%%

\expandafter\title%[...]
%{What is ... an eigenpolytope?}
{Eigenpolytopes, spectral polytopes and edge-transitivity}
		
\author[M. Winter]{Martin Winter}
\address{Faculty of Mathematics, University of Technology, 09107 Chemnitz, Germany}
\email{martin.winter@mathematik.tu-chemnitz.de\newline\rule{0pt}{1.5cm}\includegraphics[scale=0.7]{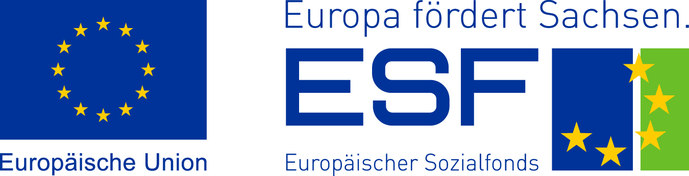}}
	
\subjclass[2010]{51M20, 52B05, 52B11, 52B12, 52B15, 05C50, 05C62}
% 51M20 - Polyhedra and polytopes
% 52B05 - Combinatorial properties of polytopes and polyhedra
% 52B11 - $n$-dimensional polytopes
% 52B12 - Special polytopes
% 52B15 - Symmetry properties of polytopes
% 05C50 - Graphs and linear algebra
% 05C62 - Graph representations
\keywords{Eigenpolytopes, spectral polytopes, edge-transitive polytopes, spectral graph realization}
		
\date{\today}
\begin{abstract}
Starting from a finite simple graph $G$, for each eigenvalue $\theta$ of its adjacency matrix 
%\mbox{$\theta\in\Spec(G)$} 
one can construct a convex polytope $P_G(\theta)$, the so called \emph{$\theta$-eigenpolytop} of $G$.
For some polytopes this technique can be used to reconstruct the polytopes from its edge-graph.
Such polytopes (we shall call them \emph{spectral}) are still badly understood.
%Occasionally, eigenpolytopes can be used to reconstruct a polytope from its edge-graph, but it is not well-understood for which graphs/polytopes this works.
%We call a graph/polytope \emph{spectral} if this works .
We give an overview of the literature for eigenpolytopes and spectral polytopes.

We introduce a geometric condition by which to prove that a given polytope is \mbox{spectral} (more exactly, $\theta_2$-spectral).
We apply this criterion to the \emph{edge-transitive} polytopes.
We~show that every edge-transitive polytope is $\theta_2$-spectral, is uniquely determined by this graph, and realizes all its symmetries.
We give a complete classification of distance-transitive polytopes.
\end{abstract}

\maketitle

\section{Introduction}
\label{sec:introduction}

\emph{Eigenpolytopes} are a construction in the intersection of combinatorics and geometry, using techniques from spectral graph theory.
Eigenpolytopes provide a way~to associate several polytopes to a finite simple graph, one for each eigenvalues of~its adjacency matrix.
A formal definition can be found in \cref{sec:def_eigenpolytope}.

\vspace{0.5em}
\begin{center}
\includegraphics[width=0.7\textwidth]{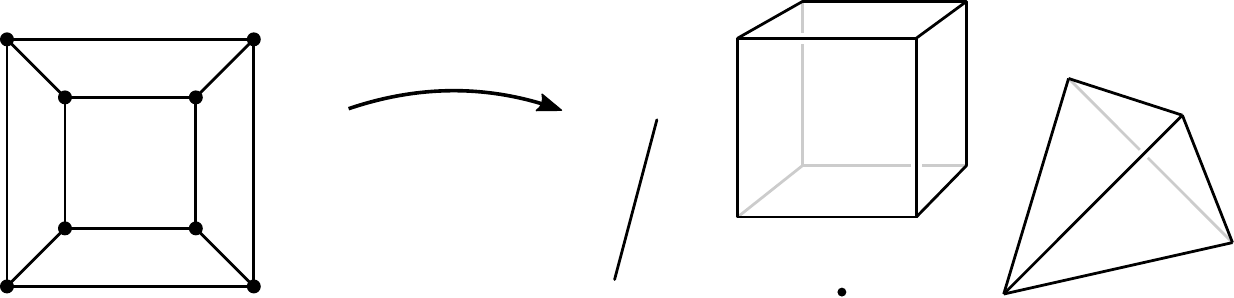}
\end{center}
\vspace{0.5em}

Eigenpolytopes can be applied from two directions:
for the first, one starts~from a given graph, computes its eigenpolytopes, and tries to deduce, from the geometry and combinatorics of these polytopes, something about the original~graph.
For the other direction, one starts with a polytope, asks whether it is an eigenpolytope, and if so, for which graphs, which eigenvalues, and how these relate to the original polytope.
Eigenpolytopes have several interesting geometric and algebraic properties, and establishing that a family of polytopes consists of eigenpolytopes opens up their study to the techniques of spectral graph theory.

For some graphs the connection to their eigenpolytopes is especially strong: it can happen  that a graph is the edge-graph of one of its eigenpolytopes, or equivalently, that a polytope is an eigenpolytope of its edge-graph.
Such graphs/polytopes are quite special and we shall call them \emph{spectral}.
For example, all regular polytopes are spectral, but there are many others.
Their properties are not well-understood.

We survey the literature of eigenpolytope and spectral polytopes.
We establish a technique with which to prove that certain polytopes are spectral polytopes and we apply it to \emph{edge-transitive} polytopes. That are polytopes for which the Euclidean symmetry group $\Aut(P)\subset\Ortho(\RR^d)$ acts transitively on the set of edge $\F_1(P)$.
As we shall explain, this characterization suffices to proves that an edge-transitive polytope is uniquely determined by its edge-graph, and also realizes all its combinatorial symmetries.
A complete classification of edge-transitive polytopes is not known as of yet.
However, using results on eigenpolytopes, we are able to give a complete classification of a sub-class of the edge-transitive polytopes, namely, the \emph{distance-transitive polytopes}.

%Besides this results, we aim to give an extensive literature overview of eigenpolytopes, including the first perspective \enquote{from the graph to the polytope}.
%A phenomenon that seems to haev attracted peoples attention repeatedly, is when a polytope is the eigenpolytope of its edge-graph.
%We shall call these polytopes \emph{spectral}.
%...
%
%The prerequisite for this paper is a mild dose of spectral graph theory.
%It might be helpful to already have heard about spectral realizations, but we also get away without mentioning this concept explicitly.
%In case of doubt, consider \cite{winter2020symmetric}, where we previously discussed spectral realizations of highly symmetric graphs.
%The proofs of some statements will be just cited from this paper.

\subsection{Outline of the paper}

\cref{sec:eigenpolytopes} starts with a motivating example for directing the reader towards the definition of the \emph{eigenpolytope} as well as the phenomenon of \emph{spectral} graphs and polytopes.
We include a literature overview for~\mbox{eigenpolytopes} and spectral polytopes.

In \cref{sec:balanced_spectral} we give a first rigorous definition for the notion \enquote{spectral polytope} via \emph{balanced polytopes}. The latter is a notion related to the rigidity theory.
%This highlights the rigidity theory perspective of that idea.

In \cref{sec:izmestiev} we introduce the, as of yet, most powerful tool for proving that~certain polytopes are spectral.

In the final section, \cref{sec:edge_transitive}, we apply this result to \emph{edge-transitive} polytopes.
It is a simple corollary of the previous section that these are $\theta_2$-spectral.
We explore the implications of this finding: edge-transitive polytopes (in dimension $d\ge 4$) are uniquely determined by the edge-graph and realize all of its symmetries.~%
We~discuss sub-classes, such as the arc-, half- and distance-transitive polytopes.
We close with a complete classification of the latter (based on a result of Godsil).

%The idea itself has spawned several times and appears too immediate to not yield any interesting results.
%
%In easy terms, one starts with a finite, simple graph $G$ and chooses an eigenvalue $\theta\in\Spec(G)$ (\ie\ of its adjacency or Laplace matrix).
%From this, one constructs a polytope $P_G(\theta)$, the so-called \emph{$\theta$-eigenpolytope}.
%If the eigenvalue had multiplicity $d$, then the polytope has dimension $d$.
%One hence expects the most interesting results for graphs with large eigenspaces.
%
%The hope then is that the geometry and combinatorics of $P_G(\theta)$ tells us something deep about $G$.
%Results of this form are not too abundant.
%Still, people have asked other questions about eigenpolytope, and the answeres turn out to be interesting.

\tempnewpage

\section{Eigenpolytopes and spectral polytopes}
\label{sec:eigenpolytopes}

\subsection{A motivating example}
\label{sec:example}

Let $G=(V,E)$ be the edge-graph of the cube, with vertex set $V=\{1,...,8\}$,~num\-bers assigned to the vertices as in the figure below.
\begin{figure}[h!]
\centering
\includegraphics[width=0.48\textwidth]{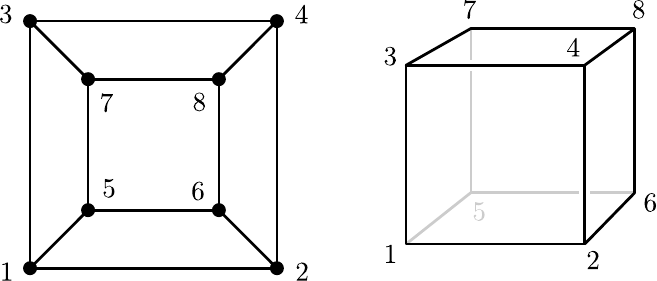}
\end{figure}

\noindent
The spectrum of that graph (\ie\ of its adjacency matrix) is $\{(-3)^1,(-1)^3,1^3,3^1\}$.
Most often, one denotes the largest eigenvalue by $\theta_1$, the second-largest by $\theta_2$, and so on.
In spectral graph theory, there exists the general rule of thumb that the most exciting eigenvalue of a graph is not its largest, but its \emph{second-largest} eigenvalue $\theta_2$ (which is related to the \emph{algebraic connectivity} of $G$).

For the edge-graph of the cube, we have $\theta_2=1$,~of multiplicity \emph{three}.
And here are three linearly independent eigenvectors to $\theta_2$:
%
%\begin{align*}
%u_1^\top &= (+ 1,+ 1,+ 1,+ 1,-1,-1,-1,-1), \\
%u_2^\top &= (+ 1,+ 1,- 1,- 1,+1,+1,-1,-1), \\
%u_3^\top &= (+ 1,- 1,+ 1,- 1,+1,-1,+1,-1), \\
%\end{align*}
%

\vspace{0.5em}

\begin{center}
\raisebox{-3.3em}{\includegraphics[width=0.23\textwidth]{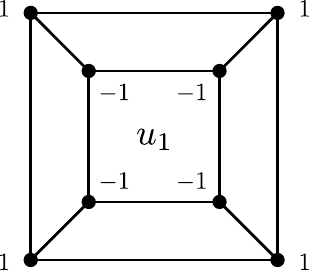}}
\qquad
$
u_1 = \begin{pmatrix}
\phantom+1\\ \phantom+1 \\ \phantom+1 \\ \phantom+1 \\ -1 \\ -1 \\ -1 \\ -1
\end{pmatrix},\quad
u_2 = \begin{pmatrix}
\phantom+1\\ \phantom+1 \\ -1 \\ -1 \\ \phantom+1 \\ \phantom+1 \\ -1 \\ -1
\end{pmatrix},\quad
u_3 = \begin{pmatrix}
\phantom+1\\ -1 \\ \phantom+1 \\ -1 \\ \phantom+1 \\ -1 \\ \phantom+1 \\ -1
\end{pmatrix}.
$
\end{center}

\vspace{0.7em}
\noindent
We can write these more compactly in a single matrix $\Phi\in\RR^{8\x 3}$:

\vspace{0.4em}
\begin{center}
$\Phi= \;\begin{blockarray}{(lll)r}%\begin{block}{(ccc)c}
\phantom+1 & \phantom+1 & \phantom+1\;\; &\text{\quad \footnotesize $\leftarrow v_1$}\\
\phantom+1 & \phantom+1 & -1 & \text{\quad \footnotesize $\leftarrow v_2$} \\
\phantom+1 & -1 & \phantom+1 & \text{\quad \footnotesize $\leftarrow v_3$} \\
\phantom+1 & -1 & -1 & \text{\quad \footnotesize $\leftarrow v_4$} \\
-1 & \phantom+1 & \phantom+1 & \text{\quad \footnotesize $\leftarrow v_5$} \\
-1 & \phantom+1 & -1 & \text{\quad \footnotesize $\leftarrow v_6$} \\
-1 & -1 & \phantom+1 & \text{\quad \footnotesize $\leftarrow v_7$} \\
-1 & -1 & -1 & \text{\quad \footnotesize $\leftarrow v_8$} \\
%\end{block}
\end{blockarray}.$
\qquad
\raisebox{-4.2em}{\includegraphics[width=0.4\textwidth]{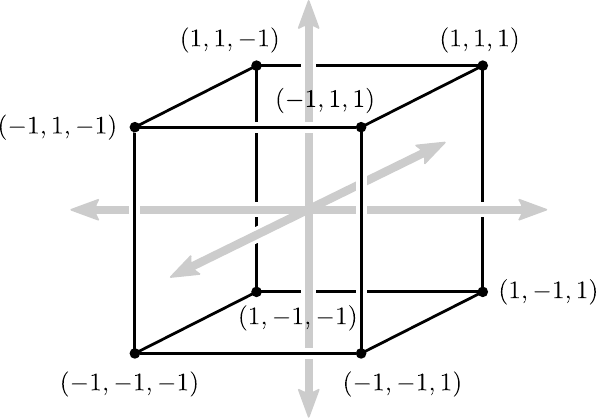}}
\end{center}

We now take a look at the rows of that matrix, of which it has exactly eight.~These rows are naturally assigned to the vertices of $G$ (assign $i\in V$ to the $i$-th row of~$\Phi$), and each row can be interpreted as~a~vec\-tor in $\RR^3$.

If we place each vertex $i\in V$ at the position $v_i\in\RR^3$ given by the $i$-th row of $\Phi$, we find that this embedds the graph $G$ \emph{exactly} as the skeleton of a cube (see the figure above).
In other words: if we compute the convex hull of the $v_i$, we get back the polyhedron from which we have started.
What a coincidence, isn't it?

This example was specifically chosen for its nice numbers, but in~fact,~the same works out as well for many other polytopes, inclu\-ding all the regular polytopes~in all dimension.
One probably learns to appreciate this magic when suddenly in~need for the vertex coordinates of some not so nice polytope, say, the regular dodecahedron or 120-cell.
With this technique in the toolbox, these coordinates are just one eigenvector-computation away (we included a short Mathematica script in \cref{sec:appendix_mathematica}).
Note also, that we never specified~the~dim\-en\-sion of~the~embed\-ding, but it just so happened, that the second-largest eigenvalue has the right multiplicity.
This phenomenon definitely deserves an explanation.

%Now, a short reality check: could this have been a coincidence?
%In fact, \emph{no}.
%The same works perfectly well for all the other regular polytopes (always making use of the second-largest eigenvalue).

\subsubsection*{On the choice of eigenvectors}
\label{sec:choice_of_eigenvectors}

One might object that the chosen eigenvectors $u_1, u_2$ and $u_3$ look suspiciously cherry-picked, and we may not get such a nice result if we would have chosen just any eigenvectors.
And this is true.
For an appropriate choice of these vectors, we can, instead of a cube, get a cuboid, or a parallelepiped.
In fact, we can obtain any \emph{linear} transformations of the cube.
\emph{But}, we can also get \emph{only} linear transformations, and nothing else.
The reason is the following well~know  fact from linear algebra:

\begin{theorem}
\label{res:same_column_span}
Two matrices $\Phi,\Psi\in\RR^{n\x d}$ have the same column span, \ie~$\Span \Phi=\Span \Psi$, if and only if their rows are related by an invertible linear transformation, \ie\ $\Phi=\Psi T$ for some $T\in\GL(\RR^d)$.
\end{theorem}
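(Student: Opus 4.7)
The ``$\Leftarrow$'' direction is immediate: if $\Phi = \Psi T$ with $T \in \GL(\RR^d)$, then the columns of $\Phi$ are linear combinations of the columns of $\Psi$, giving $\Span\Phi \subseteq \Span\Psi$; the reverse inclusion follows from $\Psi = \Phi T^{-1}$.

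For the ``$\Rightarrow$'' direction, my plan is to reinterpret the $n\x d$ matrices $\Phi,\Psi$ as linear maps $\RR^d \to \RR^n$ whose images are the respective column spans, and to exhibit $T \in \GL(\RR^d)$ with $\Phi = \Psi T$. The hypothesis $\Span\Phi = \Span\Psi$ says $\im\Phi = \im\Psi$, so every column of $\Phi$ lies in $\im\Psi$. Writing out each such column as a combination of the columns of $\Psi$ and collecting the coefficients into a matrix immediately yields \emph{some} $T \in \RR^{d\x d}$ with $\Phi = \Psi T$. The content of the statement reduces to the claim that $T$ can be arranged to be invertible.

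In the case where $\Psi$ has full column rank (which is the generic situation), this is automatic: $T$ is then uniquely determined, and by the same reasoning there is a unique $S$ with $\Psi = \Phi S$; combining gives $\Psi = \Psi T S$, and injectivity of $\Psi$ forces $T S = I$, hence $T \in \GL(\RR^d)$. The only subtle point is the degenerate case in which the common column span has dimension $k < d$, so that the $T$ above is only determined modulo matrices with columns in $\ker\Psi$. Here I would invoke rank-nullity to match $\dim\ker\Phi = \dim\ker\Psi$, pick complements $C_\Phi$ and $C_\Psi$ of these kernels in $\RR^d$, define $T$ on $C_\Phi$ as the isomorphism $(\Psi|_{C_\Psi})^{-1} \circ \Phi|_{C_\Phi} \colon C_\Phi \to C_\Psi$, and extend it by any chosen isomorphism $\ker\Phi \to \ker\Psi$. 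By construction $T$ is an automorphism of $\RR^d$ and the identity $\Psi T = \Phi$ holds on both summands. I expect this kernel-matching step to be the only part of the argument that is not purely mechanical.
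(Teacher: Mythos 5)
Your proof is correct. Note that the paper itself offers no proof of this statement — it is quoted as a ``well known fact from linear algebra'' and used as a black box — so there is nothing to compare against; your write-up simply supplies the missing argument. Both directions check out, and you correctly identify and handle the only delicate point: when $\rank\Phi=\rank\Psi=k<d$, the coefficient matrix $T$ with $\Phi=\Psi T$ is not unique, and your kernel-matching construction (sending a complement of $\ker\Phi$ isomorphically onto a complement of $\ker\Psi$ via $(\Psi|_{C_\Psi})^{-1}\circ\Phi|_{C_\Phi}$, and $\ker\Phi$ onto $\ker\Psi$ by any isomorphism) does produce an invertible $T$ satisfying $\Psi T=\Phi$ on both summands. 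In the paper's actual use case the matrices have full column rank $d$ (the columns form a basis of an eigenspace, resp.\ the rows span $\RR^d$ since $P$ is full-dimensional), so only your ``generic'' case is ever invoked, but proving the degenerate case as well matches the theorem as stated.
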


\noindent
%Since the rows of our matrices are the coordinates of the $v_i$, this explains our the comment concerning linear transformations.
In our case, the column span is the $\theta_2$-eigenspace, and the rows are the coordinates of the $v_i$.
We say that any two polytopes constructed in this way are \emph{linearly equivalent}.

The only notable property of the chosen basis in the example is, that the vectors $u_1, u_2$ and $u_3$ are orthogonal and of the same length.
Any other choice of such a basis of~the~\mbox{$\theta_2$-eigen}\-space (\eg\ an orthonormal basis) would also have given a cube, but reoriented, rescaled and probably with less nice coordinates.
For details on how this choice relates to the orientation, see \eg\ \cite[Theorem 3.2]{winter2019geometry}.

\subsection{Eigenpolytopes}
\label{sec:def_eigenpolytope}

We compile our example into a definition.

\begin{definition}\label{def:eigenpolytope}
Start with a graph $G=(V,E)$, an eigenvalue $\theta\in\Spec(G)$ thereof, as well as an orthonormal basis $\{u_1,...,u_d\}\subset\RR^n$  of the $\theta$-eigenspace. 
We define the \emph{eigenpolytope matrix} $\Phi\in\RR^{n\x d}$ as the matrix in which the $u_i$ are the columns:\phantom{mm}
\begin{equation}
\label{eq:eigenpolytope_matrix}
\Phi :=\begin{pmatrix}
		\mid & & \mid \\
		u_1 & \!\!\cdots\!\!\! & u_d \\
		\mid & & \mid		
	\end{pmatrix}=
\begin{pmatrix}
	\;\horzbar\!\!\!\! & v_1\T & \!\!\!\!\horzbar\;\; \\
	& \vdots & \\[0.4ex]
	\;\horzbar\!\!\!\! & v_n\T & \!\!\!\!\horzbar\;\;
\end{pmatrix}.
\end{equation}
Let $v_i\in\RR^d$ denote the $i$-th row of $\Phi$.
The polytope
$$P_G(\theta):=\conv\{v_i\mid i\in V\}\subset\RR^d$$
is called \emph{$\theta$-eigenpolytope} (or just \emph{eigenpolytope}) of $G$.
\end{definition}

For later use we define the \emph{eigenpolytope map} 
\begin{equation}
\label{eq:eigenpolytope_map}
\phi:V\ni i\mapsto v_i\in\RR^d
\end{equation}
that to each vertex $i\in V$ assignes the $i$-th row of the eigenpolytope matrix.

Note that the basis $\{u_1,...,u_d\}\subset\Eig_G(\theta)$ in \cref{def:eigenpolytope} is explicitly chosen~to be an \emph{orthonormal basis}.
This is not strictly necessarily, but this choice is convenient from a geometric point of view: 
a different choice for this basis gives the same~poly\-tope, but with a different orientation rather than, say, transformed~by~a~general linear transformation.
This preserves metric properties and is closer to how polytopes are usually consider up to rigid motions.
We can also reasonably speak of \emph{the} $\theta$-eigenpolytope, as any two differ only by orientation.
%Symmetries of polytopes are often considered as rigid motions.

%Note again that the choice of the basis $u_1,...,u_d\in\RR^d$ only controls the orientation of the eigenpolytope, but has no influence on its combinatorial type, etc.

%More generally, to any polytope $P\subset\RR^d$ with vertices labeled $v_1,...,v_n\in \F_0(P)$, we can associate a matrix $M$, in which the $v_i$ are the rows.
%We shall call this matrix the \emph{arrangement matrix} of $P$.
%Because \cref{def:eigenpolytope} chooses an \emph{orthonormal} basis of eigenvectors, we have the special case $M\T M=\Id$.

With this terminology in place, our observation in the example of \cref{sec:example}~can be summarized as \enquote{the cube is the $\theta_2$-eigenpolytope of its edge-graph}, or alternatively as \enquote{the cube-graph is the edge-graph of its $\theta_2$-eigenpolytope}.
Here is~a~depic\-tion of all the eigenpolytopes of the cube-graph, one for each eigenvalue:

\vspace{0.5em}
\begin{center}
\includegraphics[width=0.7\textwidth]{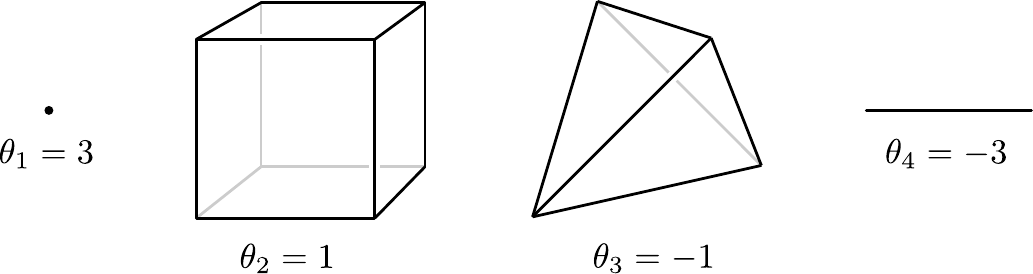}
\end{center}

\noindent
We observe that the phenomenon from \cref{sec:example} only happens for $\theta_2$.
In general, the $\theta_1$-eigenpolytope of a regular graph will always be a single point (which is, why we rarely care about the largest eigenvalue).
Also, whenever a graph is bipartite, the eigenpolytope to the smallest eigenvalue is 1-dimensional, hence a line segment.

We are now free to compute the eigenpolytopes of all kinds of graphs, \mbox{including} graphs which are not the edge-graph of any polytope (so-called \emph{non-polytopal} graphs).
It is then little surprising that no edge-graph of any of its eigenpolytope gives the original graph again.

But even if we start from a polytopal graph, one is not guaranteed to find an eigen\-polytope that has the initial graph as its edge-graph (\eg\ the edge-graph of the triangular prism has no eigenvalue of multiplicity three, hence no eigenpolytope of dimension three, see also \cref{ex:prism}).
Equivalently, if one starts with a polytope, it~is~not guaranteed that this polytope is the eigenpolytope of its edge-graph (or even combinatorially equivalent to it).

\begin{example}
\label{ex:neighborly_1}
A \emph{neighorly polytope} is a polytope whose edge-graph is the complete graph $K_n$.
The spectrum of $K_n$ is $\{(-1)^{n-1},(n-1)^1\}$.
One checks that~the~eigenpolytopes are a single point (for $\theta_1=n-1$) and the regular simplex of dimension $n-1$ (for $\theta_2=-1$).

Consequently, no neighborly polytope other than a simplex is combinatorially equivalent to an eigenpolytope of its edge-graph.
\end{example}

That a graph and its eigenpolytope translate into each other as well as in the case of the cube in \cref{sec:example} is a very special phenomenon, to which we shall give a name:
%Both situations are very special.
a polytope (or graph) for which this happens, will be called \emph{spectral}\footnote{There was at least one previous attempt to give a name to this phenomenon, namely, in \cite{licata1986surprising}, where it was called \emph{self-reproducing}.}.
We cannot formalize this definition right away, as there is some subtlety we have to discuss first (we give a formal definition in \cref{sec:balanced_spectral}, see \cref{def:spectral}).

\begin{example}
\label{ex:pentagon}
The image below shows two spectral realizations of the 5-cycle $C_5$\footnote{Spectral realizations are essentially defined like eigenpolytopes, assinging coordinates $v_i\in\RR^d$ to each vertex $i\in V$ (as in \cref{def:eigenpolytope}), but without taking the convex hull. Instead, one draws the edges between adjacent vertices.}.
\begin{center}
\includegraphics[width=0.4\textwidth]{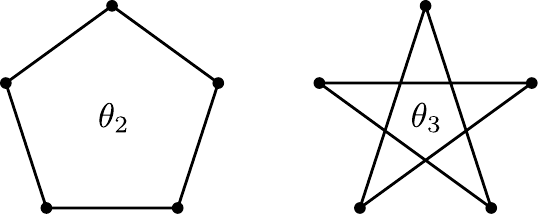}
\end{center}
The left image~shows the realization to the second-largest eigenvalue $\theta_2$, the right image shows the realization to the smallest eigenvalue $\theta_3$.
In both cases, the convex hull (the actual eigenpolytope) is a regular pentagon, whose edge-graph is $C_5$ again.
But we see that only in the case of $\theta_2$ the edges of the graphs get properly mapped into the edges of the pentagon.

While it is true that the 5-cycle $C_5$ is the edge-graph of its $\theta_3$-eigenpolytope, the adjacency informations gets scrambled in the process:
while, say, vertex 1 and 2 are adjacent in $C_5$, their images $v_1$ and $v_2$ do not form an edge in the $\theta_3$-eigenpolytope.
We do not want to call this \enquote{spectral}, as the adjacency information is not preserved.

The same can happen in higher dimensions too, \eg\ with $G$ being the edge-graph of the dodecahedron:
\begin{center}
\includegraphics[width=0.55\textwidth]{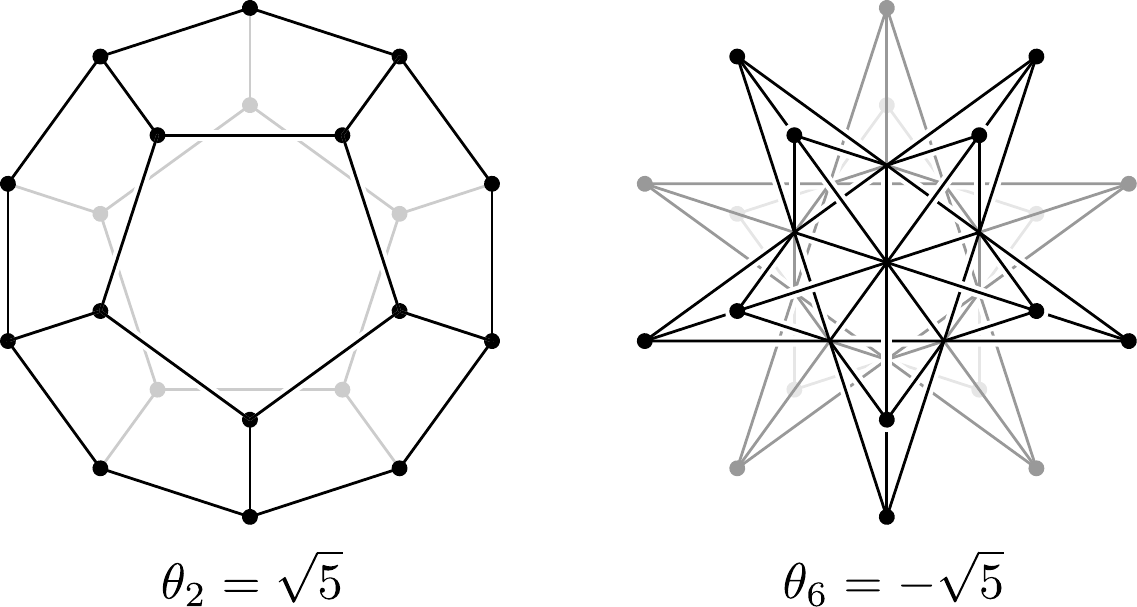}
\end{center}

\end{example}

%\begin{definition}
%Let $P\subset\RR^d$ be a polytope, and $G=(V,E)$ a graph.
%\begin{myenumerate}
%\item $P$ is said to be  \emph{$\theta$-spectral} (or just \emph{spectral}) if it is the $\theta$-eigenpolytope of its edge-graph (up to scale and orientation)\footnote{One could think of a weaker version where we require that the polytope is only \emph{combinatorially equivalent} to the eigenpolytope if its edge-graph, but we make not use of this in this paper.}.
%\item $G$ is said to be \emph{$\theta$-spectral} (or just \emph{spectral}) if it is the edge-graph~of~its~$\theta$-eigenpolytope.
%\end{myenumerate}
%\end{definition}

\begin{observation}
\label{res:theta_2_observations}
From studying many examples, there are two interesting observations to be made, both concern $\theta_2$, none of which is rigorously proven: %, and both involve the second-largest eigenvalue $\theta_2$.
\begin{myenumerate}
\item
It appears as if only $\theta_2$ can give rise to spectral polytopes/graphs.
At~least, all known examples are $\theta_2$-spectral (see also \cref{q:not_theta_2}). Some considerations on nodal domains make this plausible, but no proof is known in the general case (a proof is known in certain special cases, see \cref{res:edge_transitive_spectral_graph}).
\item
If $i\in V$ is a vertex of $G$, then $v_i$ is not necessarily a vertex of every~eigenpolytope ($v_i$ might end up in the interior of $P_G(\theta)$ or one of its faces).
And even if $v_i,v_j\in\F_0(P_G(\theta))$ are distinct vertices and $ij\in E$ is an edge of $G$, it is still not necessarily true that $\conv\{v_i,v_j\}$ is also an edge of the~eigen\-polytope (as seen in \cref{ex:pentagon}).

However, this seems to be no concern in the case $\theta_2$.
It appears as if all edges of $G$ become edges of the $\theta_2$-eigenpolytope, even if $G$ is not spectral (under mild assumptions on the end vertices of the edge).
In other words, the adjacency information of $G$ gets imprinted on the edge-graph of the $\theta_2$-eigenpolytope, whether $G$ is spectral or not.
This is known to be true only in the case of distance-regular graphs \cite[Theorem 3.3 (b)]{godsil1998eigenpolytopes}, but unproven~in general (see also \cref{q:realizing_edges})
\end{myenumerate}

\end{observation}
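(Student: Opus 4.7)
My plan is to attack the two parts of the observation separately, as they lie at rather different depths.

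For part \emph{(i)} on the special role of $\theta_2$, the approach I would take is via discrete nodal domain theory. The discrete analogue of Courant's nodal domain theorem says that a $\theta_k$-eigenvector of a connected graph has at most $k$ weak sign domains. For $k=2$ this forces exactly two sign domains, so $V$ partitions into $V^+$ and $V^-$ on which any chosen $u_2$ has constant sign, each inducing a connected subgraph. I would argue that this monotone separation is precisely what lets the spectral embedding recover $G$ as the edge-graph of its convex hull. For $\theta_k$ with $k \ge 3$ (excluding the degenerate smallest-eigenvalue of a bipartite graph), the extra sign domains should force pairs of non-adjacent vertices to be mapped into common proper faces of $P_G(\theta_k)$, thereby destroying spectrality. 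I would first try this under the simplifying hypothesis that $G$ is vertex-transitive, so that all $v_i$ lie on a common sphere, where nodal arguments combine cleanly with the rigid sphere geometry.

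For part \emph{(ii)}, my plan is to imitate and extend Godsil's argument for distance-regular graphs. The key observation there is that the Gram matrix $\Phi\Phi\T$ of the $\theta_2$-embedding equals the spectral idempotent $E_2$ projecting onto the $\theta_2$-eigenspace, and in the distance-regular setting $E_2$ is a polynomial in the adjacency matrix $A$ with coefficients controlled by the association scheme. To show that $\conv\{v_i,v_j\}$ is an edge for $ij \in E$, one needs a linear functional $\ell \in (\RR^d)^*$ attaining its maximum exactly at $v_i,v_j$; the natural ansatz is $\ell(x) = \<v_i+v_j, x\>$, which reduces the task to an inequality of the form $(E_2)_{ii} + (E_2)_{ij} > (E_2)_{ik} + (E_2)_{jk}$ for every $k \neq i,j$ with $ik,jk \notin E$. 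Under a walk-regularity assumption (constant diagonal of powers of $A$, hence constant diagonal of $E_2$), I would attempt to derive this inequality from a Perron--Frobenius-type argument localized to the $\theta_2$-eigenspace restricted to the neighborhood of the edge $ij$.

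The main obstacle in both parts is the absence of a clean algebraic handle in the fully general setting: without distance-regularity or some association-scheme structure there is no canonical polynomial expression of $E_2$ in terms of $A$, and therefore no direct translation of statements about edges of the eigenpolytope into inequalities among entries of $E_2$. Part \emph{(i)} is harder still, being a statement about the whole spectrum simultaneously; turning a ``number of sign changes'' bound into a rigorous non-convexity statement seems to require a genuinely new idea. Since the paper itself marks both assertions as open, I would realistically aim for partial results under symmetry hypotheses such as arc- or distance-transitivity, where the results of Godsil already supply most of the needed structure, and then gradually try to relax those hypotheses.
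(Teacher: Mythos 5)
There is nothing in the paper to compare your proposal against: the statement you were given is an \emph{Observation} that the author explicitly flags as empirical and unproven (``none of which is rigorously proven''), with the only rigorous content being pointers to special cases --- part $(i)$ for edge-transitive graphs via the argument that eventually becomes \cref{res:edge_transitive_spectral_graph}, and part $(ii)$ for distance-regular graphs via Godsil's \cite[Theorem 3.3 (b)]{godsil1998eigenpolytopes}. Your proposal correctly recognizes this status and, to its credit, lands on exactly the two heuristics the paper itself gestures at: discrete nodal domain counts for the primacy of $\theta_2$, and the spectral idempotent $E_2=\Phi\Phi\T$ together with a cosine-type inequality for the edge-preservation claim. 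Your reduction of ``$\conv\{v_i,v_j\}$ is an edge'' to finding a functional maximized exactly on $\{v_i,v_j\}$ is the same mechanism the paper uses in the proof of \cref{res:spectral_2}, and the inequality $(E_2)_{ii}+(E_2)_{ij}>(E_2)_{ik}+(E_2)_{jk}$ is the right target.

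Two cautions so you do not overestimate how far these sketches reach. For $(i)$, the nodal domain theorem bounds the number of \emph{weak} sign domains of a single eigenvector, and for eigenvalues of multiplicity $m>1$ the bound degrades to $k+m-1$; since spectral polytopes of dimension $d$ require multiplicity exactly $d$, the case of interest is precisely the one where the clean ``at most $k$ domains'' statement is weakest, and the step from a sign-domain count to a statement about which convex hulls of vertex pairs are faces is the genuinely missing idea (the paper's own partial result for edge-transitive graphs goes through a dimension/degree count on the eigenpolytope instead, not through nodal domains). For $(ii)$, without distance-regularity the entries of $E_2$ are not functions of graph distance, so the inequality you need has no uniform meaning over ``all pairs at distance $\ge 2$''; walk-regularity only controls the diagonal. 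So your realistic endpoint --- partial results under distance- or arc-transitivity, where Godsil's machinery already applies --- is indeed where the current literature stops, and your proposal should be read as a survey of that state of affairs rather than as progress beyond it.
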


%
%\begin{itemize}
%	\item Whenever a polytope is spectral, then it is $\theta_2$-spectral.
%	In other words, this interesting coincidence never happens for any eigenvalue other than $\theta_2$.
%	\item
%	Each edge of a graph gives an edge in the $\theta_2$-eigenpolytope.
%	That is, if $ij\in E(G)$, then $\conv\{v_i,v_j\}$ is an edge of $P_G(\theta_2)$.
%\end{itemize}

%The eigenpolytope construction can be approached from two direction:
%first, one already has a graph, and wonders what polytopes will emerge as its eigenpolytopes and what this tells us about the graph.
%Second, we are given a polytope, and we wonder whether it is the eigenpolytope of a graph.

%
%\begin{theorem}
%Every combinatorial symmetry of $G$ is a Euclidean symmetry of any eigenpolytope.
%More precisely, if $\sigma\in\Aut(G)\subseteq\Sym(V)$ is a symmetry of $G$, and $\Pi_\sigma\in\Perm(\RR^n)$ is the associated permutation matrix, then
%%
%$$T_\sigma:=M\T ...$$
%\end{theorem}

\subsection{Litarture}

Eigenpolytope were first introduced by Godsil \cite{godsil1978graphs} in 1978.
Godsil proved the existence of a group homomorphism $\Aut(G)\to\Aut(P_G(\theta))$, \ie\, any combinatorial symmetry of the graph translates into a Euclidean symmetry of the polytope.
From that, he deduces results about the combinatorial symmetry group of the original graph.

We say more about the group homomorphism: for every $\theta\in\Spec(G)$ we have
\begin{theorem}[\!\cite{godsil1978graphs}, Theorem 2.2]
\label{res:realizing_symmetries}
%Let $\theta\in\Spec(G)$ be an eigenavlue of $G$ of multiplicity $d$, and~$\Phi\in\RR^{n\x d}$ be the matrix defined in \eqref{eq:eigenpolytope_matrix}.
If $\sigma\in\Aut(G)\subseteq\Sym(n)$ is a symmetry of $G$, and $\Pi_\sigma\in\Perm(\RR^n)$ is the associated permutation matrix, then
$$T_\sigma:=\Phi\T \Pi_\sigma \Phi \;\in\; \Ortho(\RR^d),\qquad (\text{$\Phi$ is the eigenpolytope matrix})$$
is a Euclidean symmetry of the eigenpolytope $P_G(\theta)$ that also permutes the $v_i$ as~prescribed by $\sigma$, \ie\ $T_\sigma\circ \phi = \phi\circ\sigma$, or $T_\sigma v_i = v_{\sigma(i)}$ for all $i\in V$.
%
%\begin{proof}
%\TODO
%\end{proof}
\end{theorem}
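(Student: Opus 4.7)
The plan is to exploit the fact that a graph automorphism $\sigma \in \Aut(G)$ gives rise to a permutation matrix $\Pi_\sigma$ that commutes with the adjacency matrix of $G$, and hence preserves every eigenspace of $G$, in particular $\Eig_G(\theta)$. My first step is to record this preservation as a matrix identity: since the columns of $\Phi$ form an orthonormal basis of $\Eig_G(\theta)$, the inclusion $\Pi_\sigma(\Eig_G(\theta)) \subseteq \Eig_G(\theta)$ forces $\Pi_\sigma \Phi = \Phi M$ for a unique $M \in \RR^{d\times d}$. Left-multiplying by $\Phi^\top$ and using $\Phi^\top \Phi = I_d$ immediately identifies $M$ with $T_\sigma$ itself, giving the clean alternative characterization $\Pi_\sigma \Phi = \Phi T_\sigma$.

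Next I would verify that $T_\sigma$ is orthogonal. For this I introduce the symmetric projector $P := \Phi \Phi^\top$ onto $\Eig_G(\theta)$; since $\Pi_\sigma$ preserves $\Eig_G(\theta)$ and is itself orthogonal, $P$ and $\Pi_\sigma$ commute. Then
\begin{equation*}
T_\sigma^\top T_\sigma \;=\; \Phi^\top \Pi_\sigma^\top \Phi \Phi^\top \Pi_\sigma \Phi \;=\; \Phi^\top \Pi_\sigma^\top P \, \Pi_\sigma \Phi \;=\; \Phi^\top P \Phi \;=\; (\Phi^\top \Phi)(\Phi^\top \Phi) \;=\; I_d,
\end{equation*}
where the third equality uses $\Pi_\sigma^\top \Pi_\sigma = I_n$ together with the commutativity of $P$ and $\Pi_\sigma$.

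For the action on the $v_i$, I would read off from \eqref{eq:eigenpolytope_matrix} that $v_i = \Phi^\top e_i$, together with the defining relation $\Pi_\sigma e_i = e_{\sigma(i)}$. A short computation,
\begin{equation*}
T_\sigma v_i \;=\; \Phi^\top \Pi_\sigma \Phi \Phi^\top e_i \;=\; \Phi^\top \Pi_\sigma P e_i \;=\; \Phi^\top P \Pi_\sigma e_i \;=\; \Phi^\top e_{\sigma(i)} \;=\; v_{\sigma(i)},
\end{equation*}
then delivers the intertwining relation $T_\sigma \circ \phi = \phi \circ \sigma$. Since $T_\sigma$ permutes the generating set $\{v_i \mid i \in V\}$, it maps $P_G(\theta) = \conv\{v_i\}$ to itself, and is therefore an element of $\Aut(P_G(\theta))$.

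I do not expect a genuine obstacle in this argument: the whole thing is a clean interplay between orthonormality of the spectral basis and commutation of $\Pi_\sigma$ with the spectral projector $P$. The only step requiring care is fixing conventions for $\Pi_\sigma$ so that it acts on standard basis vectors by $\Pi_\sigma e_i = e_{\sigma(i)}$ (and not $e_{\sigma^{-1}(i)}$); the wrong convention would swap $\sigma$ and $\sigma^{-1}$ in the conclusion and turn the stated group homomorphism $\Aut(G) \to \Aut(P_G(\theta))$ into an anti-homomorphism.
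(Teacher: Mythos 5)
Your proof is correct, and it is essentially the standard argument (the paper itself gives no proof of this statement, deferring to Godsil): $\Pi_\sigma$ commutes with the adjacency matrix, hence preserves $\Eig_G(\theta)$, and $T_\sigma=\Phi\T\Pi_\sigma\Phi$ is precisely the matrix of $\Pi_\sigma$ restricted to that eigenspace in the orthonormal basis formed by the columns of $\Phi$, from which the key identity $\Pi_\sigma\Phi=\Phi T_\sigma$, the orthogonality of $T_\sigma$, and the intertwining $T_\sigma v_i=v_{\sigma(i)}$ all follow as you show. Your closing remark about fixing the convention $\Pi_\sigma e_i=e_{\sigma(i)}$ is exactly the right point of care.
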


This result is also proven (more generally for spectral graph realizations) in \cite[Corollary 2.9]{winter2020symmetric}.

\cref{res:realizing_symmetries} explicitly uses that eigenpolytopes are defined using an  \emph{orthonormal} bases rather than any basis of the eigenspace, to conclude that the symmetries $T_\sigma$ are \emph{orthogonal} matrices.
%From that, Godsil deduced results about the combinatorial symmetry group of the original graph.
Also, the statement of \cref{res:realizing_symmetries} is not too satisfying in general, as it can happen that non-trivial~symmetries of $G$ are mapped to the identity transformation.
We not necessarily have $\Aut(G)\cong\Aut(P_G(\theta))$.

Several authors construct the eigenpolytopes of certain famous graphs or graph families.
Powers \cite{powers1986petersen} computed the eigenpolytopes of the \emph{Petersen graph}, which he termed the \emph{Petersen polytopes} (one of which will appear as a distance-transitive polytope in \cref{sec:distance_transitive}).
The same author also investigates eigenpolytopes of general distance-regular graphs in \cite{powers1988eigenvectors}.
In \cite{mohri1997theta_1}, Mohri described the face structure of the \emph{Hamming polytopes}, the $\theta_2$-eigenpolytopes of the Hamming graphs.
Seemingly unknown to the author, these polytopes can also by described as the cartesian powers of regular simplices (also distance transitive, see \cref{sec:distance_transitive}).

There exists a wonderful enumeration of the eigenpolytopes (actually, spectral realizations) of the edge-graphs of all uniform polyhedra in \cite{blueSpectral}. Sadly, this write-up was never published formally.
This provides empirical evidence that every uniform polyhedron % (including the Platonic solids, Archimedean solids, prisms and anti-prisms)
has a spectral realization.
The same question might then be asked for uniform polytopes in higher dimensions.

Rooney \cite{rooney2014spectral} used the combinatorial structure of the eigenpolytope (the size of their facets) to deduce statements about the size of cocliques in a graph.

In \cite{padrol2010graph}, the authors investigates how common graph operations translate to operations on their eigenpolytopes.

Particular attention was given to the eigenpolytopes of distance-regular graphs \cite{powers1988eigenvectors,godsil1998eigenpolytopes,godsil1995euclidean}.
It was shown that in a $\theta_2$-eigenpolytope of a distance-regular graph~$G$,~every edge of $G$ corresponds to an edge of the eigenpolytope \cite{godsil1998eigenpolytopes}.
Consequently, $G$~is a spanning subgraph of the edge-graph of the eigenpolytope.
It remains open if the same holds for less regular graphs, \eg\ 1-walk regular graphs or arc-transitive graphs (see also \cref{q:realizing_edges}).

The observation that some polytopes are the eigenpolytopes %(actually, $\theta_2$-eigen\-polytopes) 
of their edge-graph (\ie\ they are \emph{spectral} in our terminology) was made repeatedly, \eg\ in \cite{godsil1995euclidean} and \cite{licata1986surprising}. 
In the latter, this was shown for all regular polytopes, excluding the exceptional 4-dimensional polytopes, the 24-cell, 120-cell and 600-cell.
This gap was filled in \cite{winter2020symmetric} via general considerations concerning spectral realizations of arc-transitive graphs.
In sum, all regular polytopes are known to be $\theta_2$-spectral.
%In \cite{licata1986surprising} numerical experiments have been undertaken that suggest that other polytopes are $\theta_2$-spectral.

The next major result for spectral polytopes was obtained by Godsil in \cite{godsil1998eigenpolytopes}, where he was able to classify all $\theta_2$-spectral distance-regular graphs (see also \cref{sec:distance_transitive}):

\begin{theorem}[\!\cite{godsil1998eigenpolytopes}, Theorem 4.3]
\label{res:spectral_distance_regular_graphs}
Let $G$ be distance-regular. 
If $G$ is $\theta_2$-spectral,~then $G$ is one of the following:
\begin{enumerate}[label=$(\text{\roman*}\,)$]
	\item a cycle graph $C_n,n\ge 3$,
	\item the edge-graph of the dodecahedron,
	\item the edge-graph of the icosahedron,
	\item the complement of a disjoint union of edges,
	\item a Johnson graph $J(n,k)$,
	\item a Hamming graph $H(d,q)$,
	\item a halved $n$-cube $\nicefrac12 Q_n$,
	\item the Schläfli graph, or
	\item the Gosset graph.
\end{enumerate}
\end{theorem}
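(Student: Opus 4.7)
The plan is to translate $\theta_2$-spectrality into a combinatorial-geometric condition on the intersection array of $G$, and then apply the classification theory of distance-regular graphs. Since, for distance-regular $G$, every edge of $G$ is already realized as an edge of $P_G(\theta_2)$ (by the result of \cite{godsil1998eigenpolytopes} cited in the literature survey), $G$ is $\theta_2$-spectral if and only if for every pair $i,j \in V$ with $d_G(i,j) \geq 2$, the segment $\conv\{v_i, v_j\}$ does \emph{not} appear as an edge of $P_G(\theta_2)$. So the whole problem reduces to controlling the non-edges.

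By distance-regularity, the Gram matrix $\Phi\Phi\T$ is a polynomial in the adjacency matrix, so the inner products $\langle v_i, v_j\rangle$ depend only on $k:=d_G(i,j)$; write $\langle v_i, v_j\rangle = \omega_k$, where the $\omega_0,\ldots,\omega_D$ (with $D=\diam G$) are determined by the cosine sequence of $\theta_2$ in the intersection array. All $v_i$ lie on the sphere of radius $\sqrt{\omega_0}$. A direct vertex-of-polytope argument, combined with the symmetry $T_\sigma v_i = v_{\sigma(i)}$ from \cref{res:realizing_symmetries}, shows that the pair $\{v_i,v_j\}$ spans an edge of $P_G(\theta_2)$ iff the linear functional $\langle v_i+v_j,\free\rangle$ attains its maximum over the $v_l$ exactly on $\{v_i,v_j\}$, i.e.
$$\omega_{d(i,l)} + \omega_{d(j,l)} < \omega_0 + \omega_k \quad \text{for all } l \notin \{i,j\}.$$
Requiring that this inequality \emph{fails} for every pair at distance $k\geq 2$ forces very tight restrictions on the $\omega_k$, and hence on the intersection array.

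From here the proof becomes a case analysis, split by diameter. For $D=2$ one is in the setting of strongly regular graphs, and the inequalities above (combined with the Krein and absolute bounds) leave only the complement of a perfect matching, the Schläfli graph, and the Gosset graph. For $D=3,4$ the same analysis singles out the Johnson graphs $J(n,k)$, the Hamming graphs $H(d,q)$, the halved cubes $\tfrac12 Q_n$, and the icosahedral and dodecahedral graphs. For large $D$ the inequality becomes so restrictive that only the polygons $C_n$ survive. For each graph in the resulting list, spectrality is then verified directly — for the regular-polytope cases via \cite{licata1986surprising}, for the exceptional graphs by producing explicit coordinates.

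The main obstacle is the case analysis in the classification step: one has to extract exactly the listed families from the system of $\omega_k$-inequalities, ruling out near-misses coming from other primitive or imprimitive distance-regular graphs with superficially similar spectra. The sharpness of the edge inequality above, together with the fact that the $\omega_k$ are rigidly controlled by the cosine sequence of $\theta_2$, is what ultimately makes this tractable, but carrying it out requires substantial input from distance-regular graph theory beyond the purely polytope-theoretic setup.
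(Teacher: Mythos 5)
This theorem is not proved in the paper at all: it is imported verbatim as Godsil's Theorem~4.3 from \cite{godsil1998eigenpolytopes}, so there is no internal proof to compare against. Judged on its own terms, your reduction step is the right opening move and does match the beginning of Godsil's actual argument: since every edge of a distance-regular graph is realized as an edge of $P_G(\theta_2)$, spectrality is equivalent to the absence of extra polytope-edges between vertices at distance $\ge 2$, and by distance-regularity the Gram matrix $\Phi\Phi\T$ is a polynomial in $A$, so everything is governed by the cosine sequence $\omega_0,\dots,\omega_D$.

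There are, however, two genuine gaps. First, your edge criterion --- that $\{v_i,v_j\}$ spans an edge iff the particular functional $\<v_i+v_j,\free\>$ is maximized exactly on $\{i,j\}$ --- is justified by averaging over symmetries $T_\sigma$ fixing the pair, but the theorem is stated for distance-\emph{regular} graphs, which may have trivial automorphism group; the averaging argument is only available in the distance-transitive case, so for the stated generality you would need a different argument (a face of a polytope is cut out by \emph{some} functional, not necessarily this canonical one). Second, and more seriously, the entire classification step is asserted rather than performed. The claim that for diameter $2$ the $\omega_k$-inequalities ``combined with the Krein and absolute bounds leave only'' three graphs cannot work as stated: diameter-$2$ distance-regular graphs are exactly the connected strongly regular graphs, which are not classified, so no amount of spectral bounding alone yields a finite list --- one needs a structural theorem (in Godsil's proof this role is played by deep classification results from the distance-regular literature, e.g.\ Terwilliger-type results and the root-system classification underlying the Schl\"afli/Gosset cases) that converts the cosine-sequence condition into membership in known families. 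Without identifying that mechanism, the ``case analysis, split by diameter'' does not terminate, and the proposal reduces to restating the theorem. Since this is precisely the hard content of Godsil's result, the proposal should be regarded as a correct setup followed by a citation-shaped hole.
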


A second look at this list reveals a remarkable \enquote{coincidence}: while the generic distance-regular graph has few or no symmetries, all the graphs in this list are highly symmetric, in fact, \emph{distance-transitive} (a definition will be given in \cref{sec:distance_transitive}).

It is a widely open question whether being spectral is a property solely reserved for highly symmetric graphs and polytopes (see also \cref{q:trivial_symmetry}).
There is only a single known spectral polytope that is not vertex-transitive (see also \cref{rem:edge_not_vertex} and \cref{q:spectral_non_vertex_transitive}).

\section{Balanced and spectral polytopes}
\label{sec:balanced_spectral}

In this section we give a second  approach to \emph{spectral polytopes} that circumvents the mentioned subtleties.

For the rest of the paper, let $P\subset\RR^d$ denote a full-dimensional polytope in~dimen\-sion $d\ge 2$ with vertices $v_1,...,v_n\in\F_0(P)$.
We disinguish the \emph{skeleton} of $P$, which is the graph with vertex set $\F_0(P)$ and edge set $\F_1(P)$, from the~\mbox{\emph{edge-graph}}~$G_P=(V,E)$ of $P$, which is isomor\-phic to the skeleton, but has vertex set $V=\{1,...,n\}$. The isomorphism will be denoted
\begin{equation}
\label{eq:vertex_map}
\psi:V\ni i\mapsto v_i\in\F_0(P),
\end{equation}
and we call it the \emph{skeleton map}.

% and $G_P=(V,E)$ its edge-graph.
%We shall assume that $G_P$ has vertex~set $V=\{1,...,n\}$, and so has so be distinguished from the \emph{skeleton} of $P$, whose vertices and edges are $\F_0(P)$ and $\F_1(P)$ respectively.
%Still, they are isomorphic via some isomorphism
%%
%\begin{equation}
%\label{eq:vertex_map}
%\psi:V\ni i\mapsto v_i\in\F_0(P),
%\end{equation}
%
%Then $v_1,...,v_n\in\F_0(P)$ is an enumeration of the vertices of $P$.

% $G_P=(\F_0(P),\F_1(P))$ its edge-graph with vertex set $V=\{1,...,n\}$, and for all $i\in V$, $v_i\in\F_0(P)$ the associated vertex of $P$.

\subsection{Balanced polytopes}

\begin{definition}
The polytope $P$ is called \emph{$\theta$-balanced} (or just \emph{balanced}) for some~real number $\theta\in\RR$, if
\begin{equation}
\label{eq:balanced}
\sum_{\mathclap{j\in N(i)}} v_j = \theta v_i,\quad\text{for all $i\in V$},
\end{equation}
where $N(i):=\{j\in V\mid ij\in E\}$ denotes the \emph{neighborhood} of a vertex $i\in V$.
\end{definition}

One way to interpret the balancing condition \eqref{eq:balanced} is as a kind of self-stress~con\-dition on the skeleton of $P$ (the term \enquote{balanced} is motivated from this).
For each edge $ij\in E$, the vector $v_j-v_i$ is parallel to the edge $\conv\{v_i,v_j\}$.
If $P$ is $\theta$-balanced, at each vertex $i\in V$ we have the equation
$$\sum_{\mathclap{j\in N(i)}} (v_j-v_i) = \sum_{\mathclap{j\in N(i)}} v_j - \deg(i) v_i = \big(\theta-\deg(i)\big)v_i.$$
This equation can be interpreted as two forces that cancel each other out: on the left, a contracting force along each edge (proportion only to the length of that edge), and on the right, a force repelling each vertex away from the origin (proportional to the distance of that vertex from the origin, and proportional to $\theta-\deg(i)$).

%In rigidity theory, one can interpret this line as follows: if each edge is contracting with a force proportional to its length, and each vertex is repelled from the origin with a force propotional to $\theta-\deg(i)$ and its distance from that origin, then $P$ is balanced.

A second interpretation of \eqref{eq:balanced} is via spectral graph theory.
Define the matrix
\begin{equation}
\label{eq:arrangement_matrix}
\Psi :=
\begin{pmatrix}
	\;\horzbar\!\!\!\! & v_1\T & \!\!\!\!\horzbar\;\; \\
	& \vdots & \\[0.4ex]
	\;\horzbar\!\!\!\! & v_n\T & \!\!\!\!\horzbar\;\;
\end{pmatrix}
\end{equation}
in which the $v_i$ are the rows. 
This matrix will be called the \emph{arrangement matrix} of $P$.
Note that the skeleton map $\psi$ assignes $i\in V$ to the $i$-th row of $\Psi$.
%The definition of $\Psi$ and $\psi$, and their connection, suggests a similarity to the matrix $\Phi$ and the map $\phi$ from \cref{def:eigenpolytope}. We shall come back to that later.
%
Since we use that $P\subset\RR^d$ is full-dimensional,~we have $\rank \Psi=d$.

\begin{observation}\label{res:eigenvalue}
Suppose that $P$ is $\theta$-balanced. 
The defining equation \eqref{eq:balanced} can be equivalently written as the matrix equation $A\Psi=\theta \Psi$.
In this form,~it~is~apparent that $\theta$ is an eigenvalue of the adjacency matrix $A$, and the columns of $\Psi$ are $\theta$-eigenvectors, or $\Span\Psi\subseteq\Eig_{G_P}(\theta)$.
\end{observation}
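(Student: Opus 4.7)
The approach is essentially a bookkeeping translation between a system of vector equations and a single matrix equation. First, I would unpack the matrix product $A\Psi$ row by row: writing $A = (a_{ij})$ for the adjacency matrix, so that $a_{ij} = 1$ if $ij \in E$ and $0$ otherwise, and recalling that the $i$-th row of $\Psi$ is $v_i\T$, the $i$-th row of $A\Psi$ computes as $\sum_{j\in V} a_{ij}\, v_j\T = \sum_{j \in N(i)} v_j\T$. The balancing hypothesis \eqref{eq:balanced} asserts precisely that this sum equals $\theta v_i\T$, which is the $i$-th row of $\theta\Psi$. Reading the equivalence backward row by row shows that $A\Psi = \theta\Psi$ is merely a compact form of the family of vector equations indexed by $i \in V$.

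Once $A\Psi = \theta\Psi$ is in hand, the remaining claims are immediate. Each column $\psi_k \in \RR^n$ of $\Psi$ satisfies $A\psi_k = \theta\psi_k$, so it lies in $\Eig_{G_P}(\theta)$; taking the span over all $d$ columns yields $\Span\Psi \subseteq \Eig_{G_P}(\theta)$. That $\theta$ is genuinely an eigenvalue of $A$ (rather than a \enquote{formal} scalar in the identity) follows as soon as some column of $\Psi$ is nonzero, which is guaranteed by full-dimensionality of $P$, forcing $\rank\Psi = d \ge 2$.

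There is no genuine obstacle here; the statement really is a reformulation, which is presumably why it is labeled as an observation. The only detail worth flagging is that the argument uses nothing beyond the defining identity \eqref{eq:balanced} and the shape of $\Psi$ in~\eqref{eq:arrangement_matrix}---in particular, no orthogonality of a basis or any special choice of coordinates plays a role, in contrast to the eigenpolytope construction of \cref{def:eigenpolytope} where an orthonormal basis of the eigenspace was explicitly singled out.
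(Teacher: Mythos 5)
Your proof is correct and matches the paper's intent exactly: the paper states this as an unproved observation, having just noted that full-dimensionality gives $\rank\Psi=d$, and the only content is the row-by-row identification of $A\Psi=\theta\Psi$ with the balancing condition \eqref{eq:balanced} that you carry out. Nothing further is needed.
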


We have seen that for a balanced polytope, the columns of $\Psi$ must be eigenvectors.
But they are not necessarily a complete set of~$\theta$-eigen\-vectors, \ie\ they not necessarily span the whole eigenspace.

\begin{example}
\label{ex:neighborly}
Every centered neighborly polytope $P$ is balanced, but except if it is a simplex, it is not spectral (the latter was shown in \cref{ex:neighborly_1}).
Centered~means that
$$\sum_{i\in V} v_i = 0.$$
Since $P$ is neighborly, we have $G_P=K_n$ and  $N(i)=V\setminus\{i\}$ for all $i\in V$. Therefore
$$\sum_{\mathclap{j\in N(i)}} v_j = \sum_{\mathclap{j\in V}} v_j - v_i = -v_i,\quad\text{for all $i\in V$}.$$
And indeed, $K_n$ has spectrum $\{(-1)^{n-1},(n-1)^1\}$.
So $P$ is $(-1)$-balanced.
\end{example}

The last example shows that every neighborly polytopes can be made balanced by merely translating it.
More generally, many polytopes have a realization (of~their combinatorial type) that is balanced.
But other polytopes do not:

\begin{example}
\label{ex:prism}
Let $P\subset\RR^3$ be a triangular prism.

The spectrum of the edge-graph of $P$ is $\{(-2)^2,0^2,1^1,3^1\}$.
Note that there~is~no eigenvalue of multiplicity greater than~two.
In particular, we cannot choose three linearly independent eigenvectors to a common eigenvalue.
But if $P$ were balanced, then \cref{res:eigenvalue} tells us that the columns of the arrangement matrix $\Psi$ would be three eigenvectors to the same eigenvalue (linearly independent, since $\rank \Psi=3$), which is not possible.
And so, no realization of $P$ can be balanced.
\end{example}

\subsection{Spectral graphs and polytopes}

In the extreme case, when the columns of $\Psi$ span the whole eigenspace, we can finally give a compact definition of what we want~to~consider as \emph{spectral}:

\begin{definition}
\label{def:spectral}\quad
%Let $P\subset\RR^d$ be a polytope, and $G$ be a graph.
%
\begin{myenumerate}
	\item A polytope $P$ is called \emph{$\theta$-spectral} (or just \emph{spectral}), if its arrangement matrix $\Psi$ satisfies $\Span \Psi=\Eig_{G_P}(\theta)$.
	\item A graph is said to be \emph{$\theta$-spectral} (or just \emph{spectral}) if it is (isomorphic to) the edge-graph of~a $\theta$-spectral polytope.
\end{myenumerate}
\end{definition}

This definition is now perfectly compatible with our initial motivation for the~term \enquote{spectral} in \cref{sec:def_eigenpolytope}.
%We make some effort to connect the old and new perspective:

%The following corollary shows that everything we have considered $\theta$-spectral so far is collected under this definition.
%On the other hand, the 5-cycle is not $\theta_3$-spectral.
%A $\theta$-eigenpolytope $G$ satisfies $\Span M=\Eig_G(\theta)$ by definition, and so, if $G=G_P$ we have

%\begin{lemma}
%$P$ is $\theta$-spectral if and only if both of the following hold:
%%
%\begin{myenumerate}
%	\item $P$ is linearly equivalent to the $\theta$-eigenpolytope of its edge-graph, and
%	\item this linear map $T\in\GL(\RR^d)$ from $(ii)$ satisfies $\Psi=\Phi T$.
%\end{myenumerate}
%\end{lemma}

\begin{lemma}\quad
\label{res:naive}
\begin{myenumerate}
	\item If a polytope $P$ is $\theta$-spectral, then $P$ is linearly equivalent to the $\theta$-eigenpoly\-tope of its edge-graph (see also \cref{res:naive_polytope}).
	\item If a graph $G$ is $\theta$-spectral, then $G$ is (isomorphic to) the edge-graph of its~$\theta$-eigen\-polytope (see also \cref{res:naive_graph}).
\end{myenumerate}
\end{lemma}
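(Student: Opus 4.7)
My plan is to read off both statements almost directly from the definitions, with \cref{res:same_column_span} as the single nontrivial ingredient.

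For part (i), I would compare the arrangement matrix $\Psi$ of $P$ with the eigenpolytope matrix $\Phi$ of its edge-graph $G_P$ at eigenvalue $\theta$. The hypothesis that $P$ is $\theta$-spectral is exactly $\Span\Psi = \Eig_{G_P}(\theta)$, while by construction the columns of $\Phi$ form an orthonormal basis of the $\theta$-eigenspace, giving $\Span\Phi = \Eig_{G_P}(\theta)$ as well. Applying \cref{res:same_column_span} then yields an invertible $T\in\GL(\RR^d)$ with $\Psi = \Phi T$. Reading this identity row by row produces $v_i = T\T\phi(i)$ for every $i\in V$, so $P = T\T P_{G_P}(\theta)$; this is precisely the statement that $P$ is linearly equivalent to its $\theta$-eigenpolytope.

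For part (ii), by definition of a $\theta$-spectral graph there exists some $\theta$-spectral polytope $P$ whose edge-graph is isomorphic to $G$. Part (i) already supplies an invertible linear map carrying $P$ onto $P_G(\theta)$ and sending each $v_i$ to $\phi(i)$. Since invertible linear maps preserve the face lattice and, in particular, send extreme points bijectively to extreme points, every $\phi(i)$ is a vertex of $P_G(\theta)$ and the edge-graphs of $P$ and of $P_G(\theta)$ are isomorphic; combined with $G_P\cong G$ this gives $G\cong G_{P_G(\theta)}$.

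The hardest part, such as it is, will be bookkeeping rather than genuine mathematics: keeping straight whether $\Phi$ and $\Psi$ store their data as rows or as columns when translating $\Psi = \Phi T$ into the geometric identity $v_i = T\T \phi(i)$, and noticing in (ii) that one needs \emph{all} of the $\phi(i)$ (not merely some) to be vertices of $P_G(\theta)$; \cref{res:theta_2_observations}(ii) explicitly warns that for a general balanced polytope some $\phi(i)$ may lie inside a face. The $\theta$-spectral hypothesis is exactly what rules out this degeneracy, because the linear isomorphism from (i) transports the $n$ distinct vertices of $P$ to $n$ distinct extreme points of $P_G(\theta)$.
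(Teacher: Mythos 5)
Your proof is correct and follows essentially the same route as the paper: part (i) is obtained by equating $\Span\Psi$ and $\Span\Phi$ with $\Eig_{G_P}(\theta)$ and invoking \cref{res:same_column_span}, and part (ii) by transporting the skeleton through the resulting linear isomorphism. The only cosmetic difference is that the paper packages these arguments as the stronger biconditionals \cref{res:naive_polytope} and \cref{res:naive_graph} and reads the lemma off as a corollary, while you prove the two implications directly.
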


In both cases, the converse is \emph{not} true.
This is intentional, to avoid the problems mentioned in \cref{ex:pentagon}.
Both statement will be proven below by formulating a more technical condition that is then actually equivalent to being spectral.

%\begin{proposition}
%\label{res:naive_polytope}
%Let $P$ be a polytope with arrangement matrix $\Psi$, $G_P$ its edge-graph, $P_{G_P}(\theta)$ the $\theta$-eigenpolytope of its edge-graph, and $\Phi$ the matrix \eqref{eq:eigenpolytope_matrix}~defined for $P_{G_P}(\theta)$ in \cref{def:eigenpolytope}.
%
%Then, the polytope $P$ is $\theta$-spectral {if and only if} $P$ and $P_{G_P}(\theta)$ are linearly equivalent via a linear map $T\in\GL(\RR^d)$ that satisfies $\Phi=\Psi T$.
%\end{proposition}

%This also proves \cref{res:naive} $(i)$.

%
%\begin{proposition}
%\label{res:naive_graph}
%Let $G$ be a graph, $P_G(\theta)=:Q$ its $\theta$-eigenpolytope, $G_Q$ the edge-graph thereof, and $\Phi$ the matrix \eqref{eq:eigenpolytope_matrix} defined for $P_{G}(\theta)$ in \cref{def:eigenpolytope}.
%
%Then, the graph $G$ is $\theta$-spectral if and only if $G$ is isomorphic to the edge-graph of its $\theta$-eigenpolytope via some isomorphism $\sigma:V(G)\to V(G_Q)$ that satisfies $\Phi=\Pi_\sigma \Psi$, where $\Psi$ is the arrangement matrix of some $\theta$-spectral polytope $P$ with edge-graph $G$ .
%\end{proposition}

\begin{proposition}
\label{res:naive_polytope}
A polytope $P$ is $\theta$-spectral if and only if it is linearly equivalent to the $\theta$-eigenpolytope of its edge-graph via some linear map $T\in\GL(\RR^d)$ for which the following diagram commutes:
\begin{equation}
\label{eq:diagram_polytope}
\begin{tikzcd}
P \arrow[r, "T"] & P_{G_P}(\theta)                           \\
                 & G_P \arrow[lu, "\psi"] \arrow[u, "\phi"']
\end{tikzcd}
\end{equation}
where $\phi$ and $\psi$ denote the eigenpolytope map and skeleton map respectively.
\end{proposition}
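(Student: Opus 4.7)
The plan is to reduce both sides of the stated equivalence to a single matrix equation and then invoke \cref{res:same_column_span}. Since the columns of $\Phi$ form an orthonormal basis of $\Eig_{G_P}(\theta)$, we have $\Span \Phi = \Eig_{G_P}(\theta)$, so $P$ being $\theta$-spectral is, by \cref{def:spectral}, equivalent to the matrix identity $\Span \Psi = \Span \Phi$. On the other hand, commutativity of the diagram \eqref{eq:diagram_polytope}, unpacked on vertices, amounts to $T v_i = \phi(i)$ for all $i \in V$, which in matrix form (rows of $\Psi$ are $v_i\T$, rows of $\Phi$ are $\phi(i)\T$) reads $\Psi T\T = \Phi$.

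With both sides rephrased in this way, the equivalence becomes a direct application of \cref{res:same_column_span}: that theorem states that $\Span \Psi = \Span \Phi$ if and only if there is some invertible $S \in \GL(\RR^d)$ with $\Phi = \Psi S$, and setting $T := S\T$ produces the linear map realizing the commutative diagram. To confirm that $T$ indeed carries $P$ onto $P_{G_P}(\theta)$ as a linear equivalence of polytopes (and not merely as a bijection of vertex sets), I would use that linear maps commute with convex hulls, yielding
\[
T(P) = \conv\{T v_i \mid i \in V\} = \conv\{\phi(i) \mid i \in V\} = P_{G_P}(\theta).
\]

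The whole argument is essentially a repackaging of \cref{res:same_column_span}, so there is no serious obstacle to overcome. The only thing to watch is the row/column convention, so that the transposition between $T$ and $S$ (the fact that a linear map $T$ acts on the rows of $\Psi$ by right-multiplication with $T\T$) is handled consistently; once this bookkeeping is in place, both implications follow without further work.
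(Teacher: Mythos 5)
Your proposal is correct and follows essentially the same route as the paper: both reduce $\theta$-spectrality to $\Span\Psi=\Span\Phi$ and commutativity of the diagram to a matrix identity, then invoke \cref{res:same_column_span}. Your explicit handling of the transpose (setting $T=S\T$ so that the map acts on rows) and the check that $T$ carries $P$ onto $P_{G_P}(\theta)$ via convex hulls are slightly more careful than the paper's write-up, which elides both points, but the argument is the same.
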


%Recall the definitions of $\phi$ and $\psi$.
%The map $\phi:V(G)\to\RR^d\supseteq P_G(\theta)$ from~\eqref{eq:eigenpolytope_map} is defined whenever we compute an eigenpolytope of a graph $G$.
%The map $\psi:V(G_P)$ $\to \F_0(P)$ from \eqref{eq:vertex_map} is defined whenever we have a polytope $P$ and its edge-graph $G_P$.
%These maps enumerate the rows of the matrices $\Phi$ and $\Psi$ respectively.

\begin{proof}%[Proof of \cref{res:naive_polytope}]
By definition, the $\theta$-eigenpolytope of $G_P$ satisfies $\Span \Phi=\Eig_{G_P}(\theta)$, where $\Phi$ is the corresponding eigenpolytope matrix.

Now, by definition, $P$ is $\theta$-spectral if and only if $\Span \Psi = \Eig_{G_P}(\theta)$, where $\Psi$ is its arrangement matrix.
But by \cref{res:same_column_span}, $\Phi$ and $\Psi$ have the same span if and only of their rows are related by some invertible linear map $T\in\GL(\RR^d)$, that is, $\Psi T=\Phi$, or $T\circ \psi=\phi$. The latter expresses exactly that \eqref{eq:diagram_polytope} commutes.
\end{proof}

This also proves \cref{res:naive} $(i)$.
%Proving the equivalent statement for \cref{res:naive} $(ii)$ is more tedious.

\begin{proposition}
\label{res:naive_graph}
A graph $G$ is $\theta$-spectral if and only if the eigenpolytope map~$\phi\:$ $V(G)\to \RR^d$ provides an isomorphism between $G$ and the skeleton of its $\theta$-eigenpoly\-tope $P_G(\theta)$.
\begin{proof}
Suppose first that $G$ is $\theta$-spectral.
Then there is a $\theta$-spectral polytope $Q$~with edge-graph~$G_Q=G$ and skeleton map $\psi\:V(G_Q)\to\F_0(Q)$.
By \cref{res:naive} $(i)$, $Q$ is linearly equivalent to $P_G(\theta)$ via some linear map $T\in\GL(\RR^d)$.
By \cref{res:naive_polytope}, the eigenpolytope map satisfies $\phi=T\circ \psi$.
Since $T$ induces an isomorphism between the skeleta of $Q$ and $P_G(\theta)$, and $\psi$ is an isomorphism between $G$ and the skeleton of $Q$, we find that $\phi$ must be an isomorphism between $G$ and the skeleton of $P_G(\theta)$.
This shows one direction.

For the converse, suppose that $\phi$ is an isomorphism.
Set $P:=P_G(\theta)$ and let~$G_P$ be its edge-graph with skeleton map $\psi\:V(G_P)\to \F_0(P)$.
Then $\sigma:=\psi^{-1}\circ\phi$ is a graph isomorphism between $G$ and $G_P$.
So, since $G\cong G_P$, each eigenpolytope of $G$ is also an eigenpolytope of $G_P$.
We can therefore choose $P_{G_P}(\theta)=P_G(\theta)$, with corresponding eigenpolytope map $\phi':=\sigma^{-1}\circ\phi$.
In sum, the outer square in the following diagram commutes:
\begin{center}
\begin{tikzcd}
G \arrow[r, "\sigma"] \arrow[d, "\phi"'] & G_P \arrow[d, "\phi'"] \arrow[ld, "\psi"'] \\
\mathllap{P:=\,}P_G(\theta) \arrow[r, "\Id"']    & P_{G_P}(\theta)                           
\end{tikzcd}
\end{center}
Also, by construction of $\sigma$, the upper triangle commutes.
In conclusion, the lower triangle must commute as well, which is exactly \eqref{eq:diagram_polytope} with $T=\Id$. This proves that $P$~is $\theta$-spectral via \cref{res:naive_polytope}.
Since $G$ is isomorphic to $G_P$, $G$ is $\theta$-spectral.
\end{proof}
\end{proposition}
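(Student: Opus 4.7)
The plan is to reduce the statement to its polytopal counterpart, \cref{res:naive_polytope}, by exploiting the fact that the eigenpolytope matrix $\Phi$ of $G$ already has column span equal to $\Eig_G(\theta)$ by construction, so the only question is whether the rows of $\Phi$ end up as the vertices of a polytope whose edge-graph is genuinely $G$.

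For the forward direction, suppose $G$ is $\theta$-spectral, so $G \cong G_Q$ for some $\theta$-spectral polytope $Q$ with skeleton map $\psi_Q$. \cref{res:naive_polytope} supplies a linear isomorphism $T \in \GL(\RR^d)$ with $T \circ \psi_Q = \phi_Q$, where $\phi_Q$ is the eigenpolytope map of $G_Q$. An invertible linear map carries the skeleton of $Q$ combinatorially onto the skeleton of $P_{G_Q}(\theta)$, and $\psi_Q$ is a skeleton isomorphism by definition; composing, $\phi_Q$ is a graph isomorphism between $G_Q$ and the skeleton of $P_{G_Q}(\theta)$. Transporting via the graph isomorphism $G \cong G_Q$ (which identifies the respective $\theta$-eigenspaces and hence eigenpolytopes) yields that $\phi$ is an isomorphism between $G$ and the skeleton of $P_G(\theta)$.

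For the converse, assume $\phi$ is an isomorphism between $G$ and the skeleton of $P := P_G(\theta)$, and set $\sigma := \psi_P^{-1} \circ \phi$, a graph isomorphism $G \to G_P$. It suffices to show that $P$ itself is $\theta$-spectral, since then $G \cong G_P$ is by definition $\theta$-spectral. To verify $\Span \Psi_P = \Eig_{G_P}(\theta)$, I use that $\phi$ being an isomorphism onto the skeleton forces every $\phi(i)$ to be a distinct vertex of $P$ (no row of $\Phi$ collapses with another or lands in the interior of a face). Therefore the rows of the arrangement matrix $\Psi_P$ are precisely the rows of $\Phi$ listed in the order prescribed by $\sigma$, giving $\Span \Psi_P = \Span \Phi = \Eig_G(\theta)$; pulling back by $\sigma$ this is the same subspace as $\Eig_{G_P}(\theta)$, as required.

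The main subtlety lies in the backward direction: one must genuinely use that $\phi$ is \emph{bijective onto} $\F_0(P)$ rather than merely a map into $P$. Without injectivity some rows of $\Phi$ would coincide, and without surjectivity onto the vertex set some $\phi(i)$ could lie inside a face; in either case $\Psi_P$ and $\Phi$ would differ by more than a permutation, and the spanning argument would fail. Once bijectivity is invoked, the rest is bookkeeping with the diagram relating $\phi$, $\psi_P$, and $\sigma$.
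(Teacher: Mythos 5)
Your proof is correct and follows essentially the same route as the paper: the forward direction is the identical composition-of-isomorphisms argument via \cref{res:naive_polytope}, and your backward direction simply unfolds \cref{res:naive_polytope} into the underlying column-span comparison (the paper packages the same content as a commuting diagram with $T=\Id$). One small imprecision: $\Span\Psi_P$ is not literally equal to $\Span\Phi$ but to its image under the permutation matrix of $\sigma$, which then equals $\Eig_{G_P}(\theta)$ because $\sigma$ conjugates the two adjacency matrices — your \enquote{pulling back by $\sigma$} covers exactly this, but it belongs before, not after, the identification of the two spans.
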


\noindent
This also proves \cref{res:naive} $(ii)$.

It is also possible to give a definition of spectral graphs purely in terms of graph theory, without any explicit reference to polytopes:

\begin{lemma}
\label{res:spectral_2}
A graph $G$ is $\theta$-spectral if and only if it satisfies both of the following:
\begin{myenumerate}
	\item for each vertex $i\in V$ exists a $\theta$-eigenvector $u=(u_1,...,u_n)\in\Eig_G(\theta)$~whose single largest component is $u_i$, or equivalently,
$$\Argmax_{k\in V} u_k = \{i\}.$$
	\item any two vertices $i,j\in V$ form an edge $ij\in E$ in $G$ if and only~if there is a $\theta$-eigenvector $u=(u_1,...,u_n)\in\Eig_G(\theta)$ whose only two largest components are $u_i$ and $u_j$, or equivalently,
$$\Argmax_{k\in V} u_k = \{i,j\}.$$
\end{myenumerate}
\end{lemma}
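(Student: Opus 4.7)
The plan is to translate conditions $(i)$ and $(ii)$ into purely geometric statements about the vertex configuration $v_1,\ldots,v_n$ of the $\theta$-eigenpolytope $P_G(\theta)$, and then conclude via \cref{res:naive_graph}. The main tool is a spectral–geometric duality: since $\rank\Phi=d$ and $\Span\Phi=\Eig_G(\theta)$ by construction, the map $c\mapsto\Phi c$ is a linear isomorphism $\RR^d\to\Eig_G(\theta)$. Consequently every $\theta$-eigenvector has the form $u_k=\langle v_k,c\rangle$ for a unique $c\in\RR^d$, and a statement of the form $\Argmax_{k\in V}u_k=S$ is equivalent to the linear functional $\langle\free,c\rangle$ attaining its maximum over $\{v_1,\ldots,v_n\}$ on exactly $\{v_k:k\in S\}$.

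With this dictionary the implication from $(i)$ and $(ii)$ to $G$ being $\theta$-spectral is immediate. Condition $(i)$ provides, for each $i$, a linear functional uniquely maximized at $v_i$ among the $v_k$; this forces the $v_i$ to be pairwise distinct \emph{and} each to be a vertex of $P_G(\theta)$, so $\phi\:V\to\F_0(P_G(\theta))$ is a bijection. Condition $(ii)$ translates to: $ij\in E$ iff some linear functional attains its maximum over $\{v_1,\ldots,v_n\}$ exactly on $\{v_i,v_j\}$; since a $2$-vertex exposed face of a polytope is precisely an edge, this reads $ij\in E\Leftrightarrow\conv\{v_i,v_j\}\in\F_1(P_G(\theta))$. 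Together these say that $\phi$ is a graph isomorphism between $G$ and the skeleton of $P_G(\theta)$, and so \cref{res:naive_graph} yields that $G$ is $\theta$-spectral.

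The converse runs the same argument in reverse: if $G$ is $\theta$-spectral, then by \cref{res:naive_graph} the map $\phi$ is an isomorphism from $G$ onto the skeleton of $P_G(\theta)$, so every $v_i$ is a vertex and every edge $ij\in E$ corresponds to an edge $\conv\{v_i,v_j\}\in\F_1(P_G(\theta))$. Choosing a linear functional that supports the vertex $v_i$ alone (\resp\ the edge $\conv\{v_i,v_j\}$ alone) and pulling it back through the duality $c\mapsto\Phi c$ produces the eigenvectors required in $(i)$ and in the forward half of $(ii)$; the backward half of $(ii)$ was already observed above. The only delicate point, and the one I expect to be the main obstacle to write down cleanly, is that the words \emph{single} largest and \emph{only two} largest have to be taken strictly: it is precisely this strictness that rules out the degenerate coincidences $v_i=v_j$ and that guarantees the argmax is read as an exposed face of exactly the right dimension.
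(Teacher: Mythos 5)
Your proposal is correct and follows essentially the same route as the paper: the identity $u_k=\langle x,v_k\rangle$ for $u=\Phi x$ turns each argmax condition into an exposed-face condition on $P_G(\theta)$, so that $(i)$ and $(ii)$ together say exactly that the eigenpolytope map $\phi$ is an isomorphism onto the skeleton, and one concludes via \cref{res:naive_graph}. The strictness caveat you flag at the end is exactly the point the paper relies on implicitly, so nothing is missing.
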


This characterization of spectral graphs can be interpreted as follows: a spectral graph can be reconstructed from knowing a single eigenspace, rather than, say, all eigenspaces and their associated eigenvalues.

\begin{proof}[Proof of \cref{res:spectral_2}]

Let $P_G(\theta)\subset\RR^d$ be the $\theta$-eigenpolytope of $G$ with eigenpolytope matrix $\Phi$ and eigenpolytope map $\phi\:V\ni i\mapsto v_i\in\RR^d$.

Since $\Span\Phi=\Eig_G(\theta)$, the eigenvectors $u=(u_1,...,u_n)\in\Eig_G(\theta)$ are exactly the vectors that can be written as $u=\Phi x$ for some $x\in\RR^d$.
If then $e_k\in\RR^n$ denotes the $k$-th standard basis vector, we have
$$u_k = \<u,e_k\> = \<\Phi x, e_k\> = \<x,\Phi\T\! e_k\> = \<x, v_k\>.$$
Therefore, there is a $\theta$-eigenvector $u=(u_1,...,u_n)\in\Eig_G(\theta)$ with
$\Argmax_{k\in V} u_k = \{i_1,...,i_m\}$
if and only if there is a vector $x\in\RR^d$ with
$$\Argmax_{k\in V} \<x,v_k\> = \{i_1,...,i_m\}.$$
But this last line is exactly what it means for $\conv\{v_{i_1},...,v_{i_m}\}$ to be a face of $P_G(\theta)$ $=\conv\{v_1,...,v_n\}$ (and $x$ is a normal vector of that face).

In this light, we can interpret $(i)$ as stating that $v_1,...,v_n$ form $n$ distinct vertices of $P_G(\theta)$, and $(ii)$ as stating that $\conv\{v_i,v_j\}$ is an edge of $P_G(\theta)$ if and only if $ij\in E$.
And this means exactly that $\phi$ is a graph isomorphism between $G$ and the skeleton of $P_G(\theta)$.
By \cref{res:naive_graph}, this is equivalent to $G$ being $\theta$-spectral.
%
%
%
%
%
%
%
%The set of vertices of the $\theta$-eigenpolytope $P:=P_G(\theta)\subset\RR^d$ is a~sub\-set of $\{v_1,...,v_n\}$, where the $v_i$ are as defined in \cref{def:eigenpolytope}.
%
%By the usual definition of \enquote{face of a polytope}, a set of points $\{v_{i_1},...,v_{i_m}\}\subseteq P$ is now the vertex set of a face of the $\theta$-eigenpolytope if and only of there is a vector~$x\in$ $\RR^d$ (a normal vector) with
%%
%$$\Argmax_{k\in V} \<x,v_k\> = \{i_1,...,i_m\}.$$
%%
%If $\Phi$ is the matrix \eqref{eq:eigenpolytope_matrix}, $u=(u_1,...,u_n):=\Phi x$, and $e_k\in\RR^n$ the $k$-th canonical~basis vector, then
%%
%$$\<x,v_k\> = \<x,\Phi\T\! e_k\> = \<\Phi x,e_k\> = \<u,e_k\> = u_k.$$
%%
%And since $x\mapsto u:=\Phi x$ defines an isomorphism between $\RR^d$ and $\Span \Phi=\Eig_G(\theta)$, we found that $\{v_{i_1},...,v_{i_m}\}\subseteq P$ is the vertex set of a face of $P$ if and only if there~is a~$\theta$-eigenvector $u\in\Eig_G(\theta)$ with
%%
%$$\Argmax_{k\in V} u_k = \{i_1,...,i_m\}.$$
%
%By the previous considerations, $(i)$ is equivalent to: the $v_1,...,v_n$ form distinct vertices of $P$; and $(ii)$ is equivalent~to: $\conv\{v_i,v_j\}$ is an edge of $P$ if and only if $ij\in E$.
%In conclusion, $(i)$ and $(ii)$ in combination are equivalent to: the map $\phi\:i\mapsto v_i$ is an isomorphism between $G$ and the skeleton of $P_G(\theta)$.
%By \cref{res:naive_graph}, this is equivalent to $G$ being $\theta$-spectral.
%In still other words, $(i)$ and $(ii)$ combined are equivalent to $G$ being isomorphic to the skeleton of $P$.
\end{proof}

In practice, to reconstruct a spectral graph from an eigenspace, the steps could be the following: given a subspace $U\subseteq\RR^n$ (the claimed eigenspace), then
\begin{myenumerate}
	\item choose any basis $u_1,...,u_d\in\RR^n$ of $U$,
	\item build the matrix $\Phi=(u_1,...,u_d)\in\RR^{n\x d}$ in which the $u_i$ are the columns,
	\item define $v_i$ as the $i$-th \emph{row} of $\Phi$,
	\item define $P:=\conv\{v_1,...,v_n\}\subset\RR^d$ as the convex hull of the $v_i$,
	\item the reconstructed graph $G=G_P$ is then the edge-graph of $P$.
\end{myenumerate}

\subsection{Properties of spectral polytopes}

We discuss two properties of spectral polytopes that make them especially interesting in polytope theory. 

\subsubsection*{Reconstruction from the edge-graph}
\label{sec:spectral_reconstruction}
The edge-graph of a general polytope carries little information about that polytope \ie\ given only its edge-graph, we can often not reconstruct the polytope from this (up to combinatorial equivalence).
Often, one cannot even deduce the dimension of the polytope from its edge-graph.
Reconstruction might be possible in certain special cases, as \eg\ for 3-dimensional polyhedra, simple polytopes or zonotopes.
The spectral polytopes provide another such class.

\begin{theorem}\label{res:reconstruction}
A $\theta_k$-spectral polytope is uniquely~determined by its edge-graph up to invertible linear transformations.
\end{theorem}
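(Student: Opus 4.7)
The plan is to reduce the statement to Proposition~\ref{res:naive_polytope}, which already says that every $\theta$-spectral polytope is linearly equivalent to the $\theta$-eigenpolytope of its own edge-graph. Since the eigenpolytope construction depends only on the (isomorphism class of the) graph and the eigenvalue, this should immediately force two $\theta_k$-spectral polytopes with the same edge-graph to be linearly equivalent to each other.

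First I would observe that the index $k$ together with the graph $G$ determines the eigenvalue: $\theta_k$ is by convention the $k$-th largest eigenvalue of the adjacency matrix of $G$, and the spectrum is a graph invariant. Hence, given any two $\theta_k$-spectral polytopes $P$ and $Q$ and any isomorphism $\sigma \colon G_P \to G_Q$ of their edge-graphs, the eigenvalue $\theta_k$ matches on both sides, and one may speak of the eigenpolytopes $P_{G_P}(\theta_k)$ and $P_{G_Q}(\theta_k)$ without ambiguity.

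Next I would compare these two eigenpolytopes. The permutation matrix $\Pi_\sigma$ associated to $\sigma$ conjugates the adjacency matrix of $G_P$ into that of $G_Q$, and therefore restricts to a (linear) isomorphism $\Eig_{G_P}(\theta_k) \to \Eig_{G_Q}(\theta_k)$ that is orthogonal with respect to the standard inner product. In particular it sends an orthonormal eigenbasis to an orthonormal eigenbasis, so any eigenpolytope matrix $\Phi_P$ of $G_P$ and any eigenpolytope matrix $\Phi_Q$ of $G_Q$ are related by $\Phi_Q = \Pi_\sigma \Phi_P R$ for some orthogonal $R$; by construction this shows that $P_{G_P}(\theta_k)$ and $P_{G_Q}(\theta_k)$ are orthogonally, and hence linearly, equivalent.

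Finally, Proposition~\ref{res:naive_polytope} yields that $P$ is linearly equivalent to $P_{G_P}(\theta_k)$ and that $Q$ is linearly equivalent to $P_{G_Q}(\theta_k)$. Composing the three linear equivalences shows that $P$ and $Q$ are themselves linearly equivalent, as required. I do not anticipate a serious obstacle: the only care needed is the bookkeeping between the graph isomorphism $\sigma$ and the two independent choices of orthonormal bases used in the two eigenpolytope constructions, which is exactly what Theorem~\ref{res:same_column_span} lets us absorb into the ambient $\GL(\RR^d)$-action.
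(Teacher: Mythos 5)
Your proposal is correct and follows essentially the same route as the paper: the paper's (very terse) proof simply invokes Lemma~\ref{res:naive}~$(i)$ to say every $\theta_k$-spectral polytope is linearly equivalent to the $\theta_k$-eigenpolytope of its edge-graph, which is constructed from the graph alone. Your additional bookkeeping — that a graph isomorphism $\sigma$ induces, via the orthogonal permutation matrix $\Pi_\sigma$, an orthogonal identification of the two eigenspaces and hence of the two eigenpolytopes — is a correct and welcome filling-in of a detail the paper leaves implicit.
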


The proof is simple: 
every $\theta_k$-spectral polytope is linearly equivalent to the~$\theta_k$-eigenpolytope of its edge-graph (by \cref{res:naive} $(i)$).
Our definition of the \mbox{$\theta_k$-eigen}\-polytope already suggests an explicit procedure to construct it (a script for this~is included in \cref{sec:appendix_mathematica}).
This property of spectral polytopes appears more exciting when applied to graph classes that are not obviously spectral (see \cref{sec:edge_transitive}).

\subsubsection*{Realizing symmetries of the edge-graph}
\label{sec:spectral_symmetries}
Every Euclidean symmetry of a polytope induces a combinatorial symmetry on its edge-graph.
The converse is far from true.
Think, for example, about a rectangle that is not a square.
Even worse, it can happen that a polytope does not even have a realization that realizes all the symmetries of its edge-graph (\eg\ the polytope constructed in \cite{bokowski1984combinatorial}).
%Again, spectral polytopes are special in this regard.

We have previously discussed (in \cref{res:realizing_symmetries}) the existence of a homomorphism $\Aut(G)\to\Aut(P_G(\theta))$ between the symmetries of a graph $G$ and the symmetries of its eigenpolytopes.
There are two caveats:
\begin{myenumerate}
	\item this is not necessarily an isomorphism, and
	\item it says nothing about the symmetries of the edge-graph of $P_G(\theta)$, as this one needs not to be isomorphic to $G$
\end{myenumerate}
Still, it suffices to makes statement of the following form:
if $G$ is vertex-transitive, then so are all its eigenpolytopes.
This might not work with other transitivities, as for example edge-transitivity.

%We have previously discussed (in \cref{res:realizing_symmetries}) that an eigenpolytope of a graph $G$ \enquote{realizes} all the symmetries of $G$.
%This was meant in weak sense: it can happen that non-trivial symmetries of $G$ become trivial symmetries of the eigenpolytope.
%Still, if, for example, $G$ is vertex-transitive, so are all its eigenpolytopes.
%Other symmetries do not translate as well, \eg\ edge-transitivity.

This is no concern for spectral graphs/polytopes:

\begin{theorem} \quad
\label{res:symmetries}
\begin{myenumerate}
	\item If $G$ is $\theta$-spectral, then $P_G(\theta)$ realizes all its symmetries, which includes
	$$\Aut(G)\cong\Aut(P_G(\theta))$$
	 via the map $\sigma\mapsto T_\sigma$ given in \cref{res:realizing_symmetries}, as wells as that $T_\sigma$ permutes the vertices and edges of $P_G(\theta)$ exactly as $\sigma$ permutes the vertices and edges of the graph $G$.
	\item If $P$ is $\theta$-spectral, then $P$ has a realization that realizes all the symmetries~of its edge-graph, namely, the $\theta$-eigenpolytope of its edge-graph.
\end{myenumerate}
	%
%\begin{proof}
%If $G$ is spectral, then the vertices of $P_G(\theta)$ are exactly the $n$ distinct points $v_1,...,v_n$.
%As stated in \cref{res:realizing_symmetries}, $T_\sigma$ permutes the $v_i$ as prescribed by $\sigma$, which now applies to the vertices, and by this, also to the edges.
%A non-trivial symmetry $\sigma\in\Aut(G)$ does not fixes all vertices of $G$, and hence, neither can $T_\sigma$.
%Therefore, the homomorphism $\sigma\mapsto T_\sigma$ is an isomorphism.
%This proves $(i)$.
%
%
%
%\end{proof}

%
%If $G$ is $\theta$-spectral, then $\Aut(G)\cong \Aut(P_G(\theta))$. % via
%%%
%%$$\Aut(G)\ni \sigma \quad \mapsto\quad  \phi\circ \sigma\circ \phi^{-1}\in\Aut(P_G(\theta)),$$
%%%
%%where $\phi$ is the corresponding eigenpolytope map.
%
%In particular, every $\theta$-spectral polytope has a realization that realizes all the symmetries of its edge-graph, namely, the $\theta$-eigenpolytope of its edge-graph.
%
%\begin{proof}
%A homomorphism ... \msays{Here we need that $G$ is isomorphic to the edge graph of $P_G(\theta)$.}
%
%By \cref{res:naive_polytope}, a $\theta$-spectral polytope $P$ is linearly equivalent to $P_{G_P}(\theta)$, and so the latter is a realization of $P$.
%Since $G_P$ is $\theta$-spectral, $P_{G_P}(\theta)$ also realizes all the symmetries of $G_P$ (as proven above).
%\end{proof}
\end{theorem}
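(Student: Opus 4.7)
The plan is to bootstrap the group homomorphism $\sigma\mapsto T_\sigma$ from \cref{res:realizing_symmetries} into an isomorphism, using the fact that when $G$ is $\theta$-spectral the eigenpolytope map $\phi$ is a graph isomorphism between $G$ and the skeleton of $P_G(\theta)$ (\cref{res:naive_graph}).

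For (i), I would first verify injectivity: if $T_\sigma=\Id$ then $v_{\sigma(i)}=T_\sigma v_i=v_i$ for all $i\in V$, and since $\phi\: i\mapsto v_i$ is injective (being a graph isomorphism), this forces $\sigma=\id$. Surjectivity is the substantive step. Given $T\in\Aut(P_G(\theta))$, it permutes the vertex set $\F_0(P_G(\theta))=\{v_1,\dots,v_n\}$ and respects edges, so the map $\sigma:=\phi^{-1}\circ T\circ \phi\in\Sym(V)$ is well-defined and lies in $\Aut(G)$ because, under the isomorphism $\phi$, edges of $G$ correspond to edges of $P_G(\theta)$. By construction $T_\sigma v_i=v_{\sigma(i)}=Tv_i$ for every $i$, and since the $v_i$ are the rows of the rank-$d$ eigenpolytope matrix $\Phi$ they span $\RR^d$; the two orthogonal maps $T_\sigma$ and $T$ therefore coincide. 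The additional assertion that $T_\sigma$ permutes vertices and edges of $P_G(\theta)$ exactly as $\sigma$ permutes those of $G$ then follows directly from $T_\sigma v_i=v_{\sigma(i)}$ combined with the fact that $\phi$ is a graph isomorphism.

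For (ii), if $P$ is $\theta$-spectral, then its edge-graph $G_P$ is $\theta$-spectral by \cref{def:spectral}\,(ii). Applying (i) to $G_P$ shows that $Q:=P_{G_P}(\theta)$ realizes all symmetries of $G_P$. By \cref{res:naive_polytope}, $P$ and $Q$ are linearly equivalent (via a map making \eqref{eq:diagram_polytope} commute), hence combinatorially isomorphic and sharing the same edge-graph $G_P$. Thus $Q$ is a realization of the combinatorial type of $P$ on which every symmetry of the edge-graph is realized as a Euclidean symmetry, which is exactly what is claimed.

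The main obstacle I anticipate is the surjectivity argument in (i): it requires two delicate observations that both hinge on spectrality, namely that the permutation $\sigma$ induced from an arbitrary $T\in\Aut(P_G(\theta))$ actually lies in $\Aut(G)$ (not merely in $\Sym(V)$), and that agreement of the orthogonal maps $T_\sigma$ and $T$ on the finite set $\{v_1,\dots,v_n\}$ propagates to equality on all of $\RR^d$. Both use the full strength of $\Span\Phi=\Eig_G(\theta)$ and the full-dimensionality of $P_G(\theta)$; without these, one only obtains the weaker homomorphism of \cref{res:realizing_symmetries}.
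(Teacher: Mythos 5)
Your proof is correct and follows essentially the same route as the paper, which likewise derives injectivity of $\sigma\mapsto T_\sigma$ from the distinctness of the $v_i$ and obtains (ii) by noting that $P_{G_P}(\theta)$ is a realization of $P$ via \cref{res:naive}\,$(i)$. In fact you are more careful than the paper's own sketch: the surjectivity argument (pulling an arbitrary $T\in\Aut(P_G(\theta))$ back through the graph isomorphism $\phi$ and using that the $v_i$ span $\RR^d$ to conclude $T=T_\sigma$) is only implicit in the paper, and spelling it out is a genuine improvement.
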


This is mostly straight forward, with large parts already addressed in \mbox{\cref{res:realizing_symmetries}}.
The major difference is that for spectral graphs $G$ the eigenpolytope has exactly the distinct vertices $v_1,...,v_n\in\RR^d$.
The statement from \cref{res:realizing_symmetries} that $T_\sigma$~per\-mutes the $v_i$ as prescribed by $\sigma$, then becomes, that $T_\sigma$ permutes the \emph{vertices} as prescribed by $\sigma$, and hence also the edges.
Also, since the $v_i$ are distinct, no non-trivial symmetry $\sigma$ can result in trivial $T_\sigma$, making $\sigma\mapsto T_\sigma$ into a group \emph{isomorphism}.

%This is also seen easily: by \cref{res:realizing_symmetries}, each symmetry of the graph corresponds to a symmetry of the polytope, and every symmetry of the polytope must restrict to a symmetry of the graph.
%In contrast to general eigenpolytope, this time no two vertices of $G$ get mapped to the same point, and so no non-trivial symmetry of $G$ can be mapped to the identity.
%
%\cref{res:realizing_symmetries} gave the explicit map
%%
%$$\Aut(G)\ni\sigma\quad\mapsto \quad T_\sigma:= \Phi\T\Pi_\sigma\Phi\in\Aut(P_G(\theta)).$$
%%
%This can be written as $\sigma\mapsto \phi\circ\sigma\circ\phi^{-1}$, where $\phi$ is the eigenpolytope map.

For part $(ii)$ merely recall that the eigenpolytope $P_{G_P}(\theta)$ is indeed a realization of $P$ by \cref{res:naive} $(i)$.

The major consequence of this is, that for spectral graphs/polytopes also more complicates types of symmetries translate between a polytope and its graph, as \eg\ edge-transitivity (see also \cref{sec:edge_transitive}).

\section{The Theorem of Izmestiev}
\label{sec:izmestiev}

We introduce our, as of yet, most powerful tool for proving that certain polytopes are $\theta_2$-spectral.
For this, we make use of a more general theorem by Izmestiev \cite{izmestiev2010colin}, first proven in the context of the Colin de Verdière graph invariant.
The proof of this theorem requires techniques from convex geometry, most notably, mixed volumes, which we not address here.
We need to introduce some terminology.

%It can be interesting to study the properties that follow from a polytope being balanced or spectral.
%But it is equally interesting and important to actually determine what polytopes are spectral, or can be made spectral while preserving the combinatorial type.
%
%Of course, for each particular polytope it is an easy task to check whether it is balanced/spectral.
%Much harder it is to find other properties of polytopes that imply it being spectral.
%One tool that can be used for this is the following theorem of Izmestiev, first proved in the context of the Colin de Verdiere graph invariant:

%Large groups of polytopes can be recognized as $\theta_2$-spectral with the help of a result of Izmestiev.
%There is a remarkable construction due to Izmestiev \cite{izmestiev2010colin}.

As before, let $P\subset\RR^d$ denote a full-dimensional polytope of dimension $d\ge 2$, with~edge-graph $G_P=(V,E),V=\{1,...,n\}$ and vertices $v_i\in \F_0(P),i\in V$.
Recall, that the \emph{polar dual} of $P$ is the polytope
$$P^\circ:=\{x\in\RR^d\mid \<x,v_i\>\le 1\text{ for all $i\in V$}\}.$$
We can replace the $1$-s in this definition by variables $c=(c_1,...,c_n)$, to obtain% the following definition:
$$P^\circ(c):=\{x\in\RR^d\mid\<x,v_i\>\le c_i\text{ for all $i\in V$}\}.$$
The usual polar dual is then $P^\circ=P^\circ(1,...,1)$.

\begin{figure}[h!]
\includegraphics[width=0.85\textwidth]{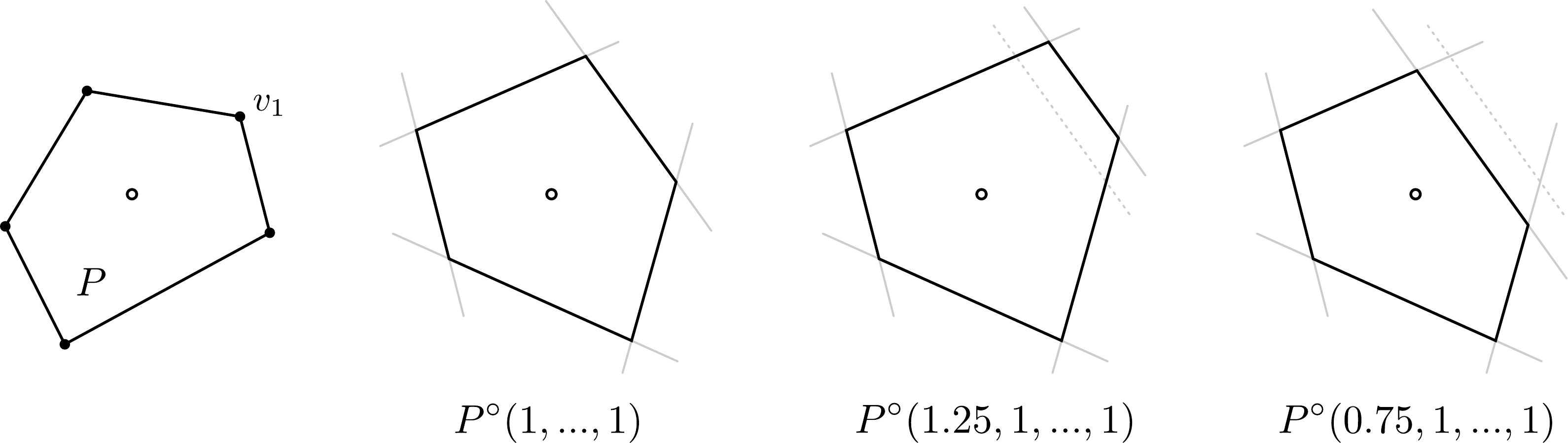}
\caption{Visualization of $P^\circ(c)$ for different values of $c\in\RR^n$.}
\end{figure}

In the following, $\vol(\free)$ denotes the volume of convex sets in $\RR^d$ (\wrt\ the usual Lebesgue measure).
Note that the function $\vol(P^\circ(c))$ is differentiable in $c$, and so we can compute partial derivatives \wrt\ the components of $c$.

\begin{theorem}[Izmestiev \cite{izmestiev2010colin}, Theorem 2.4]\label{res:izmestiev}
%Let $P\subset\RR^d$ be a polytope, $G_P$ its~edge-graph and $v\:V\to\RR^d$ its skeleton. 
%For $c=(c_1,...,c_n)\in\RR^n$ define
%%
%$$P^\circ(c):=\{x\in\RR^d\mid\<x,v_i\>\le c_i\text{ for all $i\in V$}\}.$$
%%
%Note that $P^\circ=P^\circ(1,...,1)$ is the polar dual of $P$.
Define a matrix $X\in\RR^{n\x n}$ with~compo\-nents
$$X_{ij}:=-\frac{\partial^2 \vol(P^\circ(c))}{\partial c_i\partial c_j}\Big|_{c=(1,...,1)}.$$
The matrix $X$ has the following properties:
\begin{myenumerate}
	%\item whenever $ij\in E$, and $\sigma_i,\sigma_j\in\F_{d-1}(P^\circ)$ are the dual faces to $v_i$ \resp\ $v_j$ in the dual polytope $P^\circ=P^\circ(1,...,1)$, 
	%then
	%
	%$$X_{ij}=\frac{\vol(\sigma_i\cap\sigma_j)}{\|v_i\|\|v_j\|\sin\angle(v_i,v_j)},$$
	
	\item $X_{ij}< 0$ whenever $ij\in E(G_P)$,
	\item $X_{ij}=0$  whenever $ij\not\in E(G_P)$,
	%\item $X_{ii}> 0$ for all $i\in V(G_P)$,
	\item $X\Psi=0$ (where $\Psi$ is the arrangement matrix of $P$),
	\item $X$ has a unique negative eigenvalue, and this eigenvalue is simple,
	\item $\dim\ker X=d$.
\end{myenumerate}
\end{theorem}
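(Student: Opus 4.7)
The plan is to work throughout with the smooth function $f(c) := \vol(P^\circ(c))$, which is well-defined and $C^\infty$ in a neighbourhood of $\mathbf{1} = (1,\dots,1)$ since the combinatorial type of $P^\circ(c)$ is locally constant there. By definition $X = -\mathrm{Hess}(f)\big|_{c=\mathbf{1}}$, so every claim of the theorem becomes a statement about this Hessian.

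I would first handle (i), (ii), (iii). The standard first-variation formula reads $\partial f/\partial c_i = \vol_{d-1}(F_i^\circ(c))/\|v_i\|$, where $F_i^\circ(c)$ is the facet of $P^\circ(c)$ supported by $v_i$. The $(d-1)$-volume of $F_j^\circ$ can only depend on $c_i$ if $F_i^\circ$ and $F_j^\circ$ share a ridge, which by polar duality means $ij \in E(G_P)$; this proves (ii). For (i), observe that increasing $c_i$ pushes $F_i^\circ$ outward and thereby strictly enlarges each adjacent facet, so $\partial^2 f/\partial c_i \partial c_j > 0$ and $X_{ij} < 0$. Property (iii) is pure translation invariance: shifting $P^\circ(c)$ by $t \in \RR^d$ produces $P^\circ(c + \Psi t)$, hence $f(c + \Psi t) \equiv f(c)$ in $t$. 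Differentiating once in $t$ gives $\Psi\T \nabla f(c) = 0$, and then once in $c$ yields $\Psi\T \mathrm{Hess}(f) = 0$, i.e.\ $X \Psi = 0$.

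For (iv) I would exploit two classical features of $f$: it is homogeneous of degree $d$ (because $P^\circ(tc) = t\, P^\circ(c)$), and $f^{1/d}$ is concave (Brunn--Minkowski). Differentiating Euler's identity $c\T \nabla f = d f$ once more in $c$ gives $\mathrm{Hess}(f)\mathbf{1} = (d-1)\nabla f$, hence $\mathbf{1}\T \mathrm{Hess}(f)\mathbf{1} = d(d-1) f(\mathbf{1}) > 0$, so $\mathrm{Hess}(f)$ has at least one positive eigenvalue. On the other hand, the identity
\[
 \mathrm{Hess}(f^{1/d}) \;=\; \tfrac{1}{d} f^{1/d - 2}\bigl[f\,\mathrm{Hess}(f) - \tfrac{d-1}{d}(\nabla f)(\nabla f)\T\bigr]
\]
combined with $\mathrm{Hess}(f^{1/d}) \le 0$ writes $\mathrm{Hess}(f)$ as a rank-one positive perturbation of a negative semidefinite matrix; by the Courant--Fischer principle it has at most one positive eigenvalue. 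Combined, $\mathrm{Hess}(f)$ carries a unique simple positive eigenvalue, and so $X$ has a unique simple negative one.

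The main obstacle is the exact dimension of the kernel in (v). From (iii) one has $\ker X \supseteq \mathrm{col}(\Psi)$, giving $\dim \ker X \ge d$; the reverse inequality is the deepest part of the argument. My plan is to invoke the equality case of Brunn--Minkowski (strict concavity of $f^{1/d}$ modulo homotheties), which shows $\ker \mathrm{Hess}(f^{1/d}) = \RR\mathbf{1} + \mathrm{col}(\Psi)$, of dimension $d+1$. From the displayed identity, any $x \in \ker \mathrm{Hess}(f)$ must lie both in $\ker \mathrm{Hess}(f^{1/d})$ and in $\ker \nabla f\T$. Since $\nabla f\T \mathbf{1} = d f(\mathbf{1}) > 0$ while $\nabla f\T \Psi = 0$, that intersection is exactly $\mathrm{col}(\Psi)$, forcing $\dim \ker X = d$ and simultaneously pushing the remaining $n - d - 1$ eigenvalues to be strictly positive.
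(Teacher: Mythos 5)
First, a point of reference: the paper does not prove this theorem at all — it is imported verbatim from Izmestiev \cite{izmestiev2010colin}, with the remark that the proof "requires techniques from convex geometry, most notably, mixed volumes." So your attempt has to be measured against Izmestiev's argument rather than anything in this text. Your treatment of (i)--(iii) is correct and is essentially the standard one: the first-variation formula $\partial f/\partial c_i=\vol_{d-1}(F_i^\circ(c))/\|v_i\|$, the duality between ridges of $P^\circ$ and edges of $P$, and the translation identity $P^\circ(c)+t=P^\circ(c+\Psi t)$ do exactly what you say (one should add the standing hypothesis $0\in\operatorname{int}P$, so that $P^\circ$ is bounded and every $v_i$ supports a facet). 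Part (iv) is also fine: Euler's relation gives $\mathrm{Hess}(f)\mathbf{1}=(d-1)\nabla f$ and hence $\mathbf{1}^\top\mathrm{Hess}(f)\mathbf{1}=d(d-1)f>0$, while writing $f\,\mathrm{Hess}(f)$ as a negative-semidefinite matrix plus a rank-one PSD term bounds the number of positive eigenvalues by one via Weyl/Courant--Fischer. This is a legitimate "Brunn--Minkowski" substitute for the Alexandrov--Fenchel signature argument.

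The genuine gap is in (v), and it sits exactly where you flag "the deepest part": the claim that $\ker\mathrm{Hess}(f^{1/d})(\mathbf{1})=\RR\mathbf{1}+\mathrm{col}(\Psi)$ "by the equality case of Brunn--Minkowski." The equality case of Brunn--Minkowski is a statement about an actual pair of convex bodies whose Minkowski sum achieves equality; what you need here is an \emph{infinitesimal} statement, namely that if the second derivative of the concave function $h=f^{1/d}$ vanishes at the single point $\mathbf{1}$ in a direction $u$ (where $u$ may have negative entries and need not be the support vector of any convex body), then $u\in\RR\mathbf{1}+\mathrm{col}(\Psi)$. Concavity plus $h''(0)=0$ along a line does not force $h$ to be affine on that line (think of $-s^4$), so no equality instance of Brunn--Minkowski between genuine bodies is produced, and the global equality case cannot be invoked. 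What is actually required is the equality case of Minkowski's second inequality — equivalently, Alexandrov's characterization of the kernel of the mixed-volume form $V(\cdot,\cdot,P^\circ,\dots,P^\circ)$ as the space of linear functions — which is a separate, substantially harder theorem (proved by induction on dimension over strongly isomorphic polytopes, or by elliptic-operator methods; see Schneider's monograph or the recent work of Shenfeld--van Handel). This is precisely the mixed-volume machinery the paper declines to reproduce, and your proposal replaces it with a citation that does not cover it. Until that kernel theorem is supplied, you only get $\dim\ker X\ge d$, not equality.
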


One can view the matrix $X$ as some kind of adjacency matrix of~a vertex- and edge-weighted version of $G_P$.
Part $(iii)$ states that $v$ satisfies a weighted form of the balancing condition \eqref{eq:balanced} with eigenvalue zero.
Since $\rank \Psi=d$, part $(v)$ states that $\Span \Psi$ is already the whole 0-eigenspace.
And part $(iv)$ states that zero is the second smallest eigenvalue of $X$.

\begin{theorem}\label{res:implies_spectral}
Let $X\in\RR^{n\x n}$ be the matrix defined in \cref{res:izmestiev}. 
If we have
\begin{myenumerate}
	\item $X_{ii}$ is independent of $i\in V(G_P)$, and
	\item $X_{ij}$ is independent of $ij\in E(G_P)$,
\end{myenumerate}
then $P$ is $\theta_2$-spectral.
\begin{proof}
By assumption there are $\alpha,\beta\in\RR$, $\beta>0$, so that $X_{ii}=\alpha$ for all vertices~$i\in$ $V(G_P)$, and $X_{ij}=\beta<0$ for all edges $ij\in E(G_P)$ (we have $\beta<0$ by \cref{res:izmestiev} $(i)$).
We can write this as
$$X=\alpha \Id + \beta A\quad\implies\quad (*)\;A=\frac\alpha\beta \Id+\frac1\beta X,$$
where $A$ is the adjacency matrix of $G_P$.
By \cref{res:izmestiev} $(iv)$ and $(v)$, the matrix $X$ has second smallest eigenvalue zero of multiplicity $d$.
By \cref{res:izmestiev}  $(iii)$, the columns of $M$ are the corresponding eigenvectors.
Since $\rank \Psi=d$ we find that these are all the eigenvectors and $\Span \Psi$ is the 0-eigenspace of $X$.

By $(*)$ the eigenvalues of $A$ are the eigenvalues of $X$, but scaled by $1/\beta$ and shifted by $\alpha/\beta$. Since $1/\beta <0$, the second-\emph{smallest} eigenvalue of $X$ gets mapped onto the second-\emph{largest} eigenvalue of $A$.
Therefore, $A$ (and also $G_P$) has second-largest eigenvalue $\theta_2=\alpha/\beta$ of multiplicity $d$, and $\Span \Psi$ is the corresponding eigenspace.
By definition, $P$ is then the $\theta_2$-eigenpolytope of $G_P$ and is therefore~$\theta_2$-spectral.
\end{proof}
\end{theorem}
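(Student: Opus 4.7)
The plan is to show that the uniformity hypotheses force $X$ to be an affine function of the adjacency matrix $A$ of $G_P$, and then translate the very strong spectral information about $X$ supplied by Izmestiev's theorem into the corresponding spectral statement about $A$.

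First I would write $X_{ii}=\alpha$ and $X_{ij}=\beta$ on edges, with $\beta<0$ by part (i) of \cref{res:izmestiev}, so that
\[
X=\alpha\,\Id+\beta A,
\]
because the off-diagonal entries of $X$ vanish outside the edge set by part (ii). This identity makes $A$ and $X$ simultaneously diagonalizable: they share all eigenvectors, and an eigenvalue $\lambda$ of $X$ corresponds to the eigenvalue $(\lambda-\alpha)/\beta$ of $A$. Since $\beta<0$, this affine correspondence is \emph{order-reversing}.

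Next I would locate the $\theta_2$-eigenspace of $A$. By part (iv) of \cref{res:izmestiev}, $X$ has exactly one negative eigenvalue, which is simple; by part (v), zero is an eigenvalue of $X$ of multiplicity $d$. So $0$ is the second-smallest eigenvalue of $X$. Under the order-reversing correspondence above, it becomes the second-largest eigenvalue of $A$, of the same multiplicity $d$, and the associated eigenspace is $\ker X$. Thus $\theta_2(G_P)$ has multiplicity $d$ and $\Eig_{G_P}(\theta_2)=\ker X$.

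Finally I would bring in the arrangement matrix via part (iii), $X\Psi=0$, which says that every column of $\Psi$ lies in $\ker X=\Eig_{G_P}(\theta_2)$. Since $P$ is full-dimensional, $\rank\Psi=d=\dim\ker X$, so the columns of $\Psi$ actually \emph{span} the whole $\theta_2$-eigenspace, i.e. $\Span\Psi=\Eig_{G_P}(\theta_2)$. By \cref{def:spectral} this is exactly what it means for $P$ to be $\theta_2$-spectral. The only subtlety worth being careful about is the sign in the order reversal, since the smallest-to-second-smallest ordering on $\Spec(X)$ must correspond to the largest-to-second-largest ordering on $\Spec(A)$; everything else is a short bookkeeping argument on top of \cref{res:izmestiev}.
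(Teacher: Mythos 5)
Your proposal is correct and follows essentially the same route as the paper's own proof: write $X=\alpha\Id+\beta A$ with $\beta<0$, use the order-reversing affine correspondence of spectra to identify $\ker X$ with $\Eig_{G_P}(\theta_2)$, and conclude via $X\Psi=0$ together with $\rank\Psi=d=\dim\ker X$ that $\Span\Psi=\Eig_{G_P}(\theta_2)$. No gaps; the sign bookkeeping you flag is exactly the point the paper also emphasizes.
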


It is unclear whether \cref{res:implies_spectral} already characterizes $\theta_2$-spectral polytopes, or even spectral polytopes in general (see also \cref{q:characterization}).

%\begin{corollary}
%Let $P\subset\RR^d$ be a polytope, so that
%%
%\begin{myenumerate}
%	\item its edge-graph is regular,
%	\item it is inscribed,
%	\item all edges are of the same length,
%	\item the ridges of $P^\circ$ have the same volume.
%\end{myenumerate}
%%
%Then $P$ is $\theta_2$-spectral.
%\end{corollary}

\tempnewpage
\section{Edge-transitive polytopes}
\label{sec:edge_transitive}

We apply \cref{res:implies_spectral} to edge-transitive polytopes, that is, to polytopes for~which the Euclidean symmetry group $\Aut(P)\subset\Ortho(\RR^d)$ acts transitively on the edge set $\F_1(P)$.
No classification of edge-transitive polytopes is known.
Some \mbox{edge-transitive} polytopes are listed in \cref{sec:classification}.

Despite the name of this section, we are actually going to address polytopes that are simultaneously vertex- and edge-transitive.
This is not a huge deviation from the title: as shown in \cite{winter2020polytopes}, edge-transitive polytopes in dimension $d\ge 4$ are always also vertex-transitive, and the exceptions in lower dimensions are few (a continuous family of $2n$-gons for each $n\ge 2$, and two exceptional polyhedra).
%The exceptions in lower dimensions are manageable: for $d=3$, there are only two 

%In this section we discuss the class of polytopes that are simultaenously vertex- and edge-transitive.
%This is clearly stronger than being just vertex-transitive.
%However, as shown in \cite{winter2020polytopes}, every edge-transitive polytope in dimendion $d\ge 4$ is also vertex-transitive, and so this distinction only matters for dimension up to three (in which there are only two exceptions, besides polygons).

\cref{res:implies_spectral} can be directly applied to simultaneously vertex- and edge-transitive polytopes, and so we have

\begin{corollary}
\label{res:vertex_edge_transitive_cor}
A simultaneously vertex- and edge-transitive polytope is $\theta_2$-spectral.
\end{corollary}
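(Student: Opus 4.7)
The plan is to verify the hypotheses of \cref{res:implies_spectral} for $P$ by exploiting an $\Aut(P)$-equivariance of the Izmestiev matrix $X$. Concretely, I will show that every Euclidean symmetry of $P$ induces a permutation of the vertex labels under which $X$ is invariant; vertex- and edge-transitivity will then force the diagonal entries $X_{ii}$ and the \enquote{edge} entries $X_{ij}$ ($ij\in E$) to each be constant.

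The only substantive step is to establish this equivariance. Fix $T\in\Aut(P)\subset\Ortho(\RR^d)$; since $T$ permutes the vertices of $P$, it induces a permutation $\pi\in\Sym(V)$ with $Tv_i=v_{\pi(i)}$. A short computation, using $\<Tx,v_{\pi(i)}\>=\<Tx,Tv_i\>=\<x,v_i\>$ together with the definition of $P^\circ(c)$, gives
$$T(P^\circ(c))=P^\circ(c'),\qquad\text{where } c'_{\pi(i)}=c_i \text{ for all } i\in V.$$
Since $T$ is orthogonal it preserves volume, so $\vol(P^\circ(c'))=\vol(P^\circ(c))$; equivalently, the function $f(c):=\vol(P^\circ(c))$ is invariant under the linear coordinate permutation $c\mapsto c'$. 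As the basepoint $c=(1,\ldots,1)$ is fixed by every coordinate permutation, differentiating this invariance twice at that point yields the Hessian identity
$$X_{\pi(i)\pi(j)}=X_{ij}\qquad\text{for all } i,j\in V \text{ and every such } \pi.$$

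With the equivariance in hand, the rest follows automatically. Vertex-transitivity means $\Aut(P)$ acts transitively on $\{v_1,\ldots,v_n\}$, so the induced $\pi$'s act transitively on $V$; hence all diagonal entries $X_{ii}$ share a common value $\alpha$. Edge-transitivity of $P$ is, by definition, transitivity of $\Aut(P)$ on $\F_1(P)$, which coincides with the edge set $E(G_P)$; hence all entries $X_{ij}$ with $ij\in E(G_P)$ share a common value $\beta$, necessarily negative by \cref{res:izmestiev}\,$(i)$. Both hypotheses of \cref{res:implies_spectral} are then met, and $P$ is $\theta_2$-spectral.

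The main obstacle, insofar as there is one, is the equivariance computation: one must keep careful track of whether $T$ induces the coordinate action $c\mapsto c'$ with $c'_{\pi(i)}=c_i$ or its inverse, and verify that $\mathbf 1\in\RR^n$ is a fixed point of every such action (which it is, since the action is a coordinate permutation). Once this bookkeeping is in place, the corollary is essentially immediate from \cref{res:implies_spectral}.
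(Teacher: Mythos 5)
Your proof is correct and follows essentially the same route as the paper, which simply asserts that \cref{res:implies_spectral} "can be directly applied" to vertex- and edge-transitive polytopes; the implicit content of that assertion is exactly the $\Aut(P)$-equivariance of the Izmestiev matrix that you spell out via the invariance of $c\mapsto\vol(P^\circ(c))$ under the induced coordinate permutations and differentiation at the fixed point $(1,\dots,1)$. Your write-up makes explicit a step the paper leaves to the reader, but it is the same argument.
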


We collect all the notable consequences in the following theorem:

\begin{theorem}\label{res:edge_vertex_transitive}
If $P\subset\RR^d$ is simultaneously vertex- and edge-transitive, then
\begin{myenumerate}
\item $\Aut(P)\subset\RR^d$ is irreducible as a matrix group.
	%\item $\Aut(P)$ is irreducible,
	%\item $P$ is $\theta_2$-spectral, \ie\ $P$ is the $\theta_2$-eigenpolytope of its edge-graph,
%	\item the combinatorial type of $P$ is uniquely determined by its edge-graph, or~more precisely, $P$ is (up to invertible linear transformations) the unique simultaneously vertex- and edge-transitive polytope with~edge-graph $G_P$.
\item $P$ is uniquely determined by its edge-graph up to scale and orientation.\footnote{This shows that $P$ is \emph{perfect}, \ie\ is the unique maximally symmetric realization of its combinatorial type. See \cite{gevay2002perfect} for an introduction to perfect polytopes.}
%\item $P$ is the $\theta_2$-eigenpolytope of its its edge-graph 
%(among simultaneously vertex- and edge-transitive polytopes) 
	\item $P$ realizes all the symmetries of its edge-graph. 
%	\item $P$ is uniquely determined by its edge-graph, in the sense, that it is~the~unique simultaneously vertex- and edge-transitive polytope (up to scale and orientation) with edge-graph $G_P$.
%	\item $P$ is the unique simultaneously vertex- and edge edge-transitive realization of its combinatorial type (up to scale and orientation)\,\footnote{A polytope which is the unique most symmetric realization of its combinatorial type is also known as a \emph{perfect} polytope, see \eg\  \cite{gevay2002perfect}.}\!.
	\item if $P$ has edge length $\ell$ and circumradius $r$, then
	\begin{equation}
	\label{eq:circumradius}
	\frac{\ell}r = \sqrt{\frac{2\lambda_2}{\deg(G_P)}} = \sqrt{2\Big(1-\frac{\theta_2}{\deg(G_P)}\Big)},
	\end{equation}
	where $\deg(G_P)$ is the vertex degree of $G_P$, and $\lambda_2=\deg(G_P)-\theta_2$ denotes its second smallest Laplacian eigenvalue.
	\item if $\alpha$ is the dihedral angle of the polar dual $P^\circ$, then
	\begin{equation}
	\label{eq:dihedral_angle}
	\cos(\alpha)=-\frac{\theta_2}{\deg(G_P)}.	
	\end{equation}
\end{myenumerate}
\begin{proof}
The complete proof of $(i)$ and $(ii)$ has to be postponed until \cref{sec:rigidity} (see \cref{res:edge_transitive_rigid}).
Concerning $(ii)$, from \cref{res:vertex_edge_transitive_cor} and \cref{res:reconstruction} already follows that $P$ is determined by its edge-graph up to \emph{invertible linear transformations}, but not necessarily only up to scale and orientation.

Part $(iii)$ follows from \cref{res:symmetries}.
Part $(iv)$ and $(v)$ were proven (in a more general setting) in \cite[Proposition 4.3]{winter2020symmetric}.
This applies literally to $(iv)$.
For $(v)$, note the following: if $\sigma_i\in\F_{d-1}(P^\circ)$ is the facet of the polar dual $P^\circ$ that corresponds to the vertex $v_i\in\F_1(P)$, then the dihedral angle between $\sigma_i$ and $\sigma_j$ is $\pi-\angle(v_i,v_j)$.
The latter expression was proven in \cite{winter2020symmetric} to agree with  \eqref{eq:dihedral_angle}.
\end{proof}
\end{theorem}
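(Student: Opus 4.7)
My plan is to start with \cref{res:vertex_edge_transitive_cor}: since $P$ is simultaneously vertex- and edge-transitive, it is $\theta_2$-spectral, so $\Span\Psi=\Eig_{G_P}(\theta_2)$ and $P$ is linearly equivalent to $P_{G_P}(\theta_2)$ by \cref{res:naive_polytope}. Parts $(iii)$, $(iv)$, and $(v)$ should then follow directly from this spectral description together with the transitivity hypotheses, while $(i)$ and $(ii)$ have to be deferred to the rigidity section.

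Part $(iii)$ is essentially part $(ii)$ of \cref{res:symmetries}: since $P$ is $\theta_2$-spectral, its $\theta_2$-eigenpolytope is a realization of $P$'s combinatorial type that realizes every combinatorial symmetry of $G_P$ via the map $\sigma\mapsto T_\sigma=\Phi^{\top}\Pi_\sigma\Phi$ from \cref{res:realizing_symmetries}. Because $G_P$ is then $\theta_2$-spectral as well, part $(i)$ of \cref{res:symmetries} additionally promotes this to an isomorphism $\Aut(G_P)\cong\Aut(P_{G_P}(\theta_2))$.

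For $(iv)$ and $(v)$ I would proceed by direct computation using only $\theta_2$-spectrality and the transitivity hypotheses. Vertex-transitivity forces $|v_i|=r$ to be constant (the circumradius), and edge-transitivity forces $\langle v_i,v_j\rangle=s$ to be constant over edges $ij\in E(G_P)$. Pairing the balancing relation $\sum_{j\in N(i)}v_j=\theta_2 v_i$ with $v_i$ yields $\deg(G_P)\cdot s=\theta_2 r^2$, hence $s/r^2=\theta_2/\deg(G_P)$. Expanding $\ell^2=|v_i-v_j|^2=2r^2-2s$ then gives the edge-length formula in $(iv)$, and rewriting through $\lambda_2=\deg(G_P)-\theta_2$ gives the second equality. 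For $(v)$, I would note that the facets of $P^\circ$ dual to $v_i$ and $v_j$ have outer normals parallel to $v_i$ and $v_j$, so their dihedral angle is $\alpha=\pi-\angle(v_i,v_j)$; combining with the computation above yields $\cos\alpha=-s/r^2=-\theta_2/\deg(G_P)$.

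The main obstacle is $(i)$, from which $(ii)$ then follows via Schur's lemma. For irreducibility I would try to show that any $\Aut(P)$-invariant orthogonal splitting $\RR^d=U\oplus U^\perp$ must be trivial: the projected configurations $\pi_U(v_i)$ and $\pi_{U^\perp}(v_i)$ each still satisfy the $\theta_2$-balancing relation and inherit both transitivity properties, so a non-trivial splitting would translate into a proper $\Aut(G_P)$-invariant decomposition of $\Eig_{G_P}(\theta_2)$ given by the columns of the two projected arrangement matrices. Ruling this out appears to require the infinitesimal-rigidity argument deferred to \cref{sec:rigidity}, where a non-trivial relative rescaling of the $U$- and $U^\perp$-components would produce a one-parameter family of vertex- and edge-transitive polytopes with the same edge-graph, contradicting the rigidity of such polytopes. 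Granting $(i)$, part $(ii)$ follows because any $T\in\GL(\RR^d)$ with $P=T\cdot P_{G_P}(\theta_2)$ must satisfy $T\,\Aut(P_{G_P}(\theta_2))\,T^{-1}\subseteq\Ortho(\RR^d)$; this forces $T^{\top}T$ to commute with the irreducible orthogonal representation, so by Schur's lemma $T^{\top}T=c\,\Id$ for some $c>0$, i.e.\ $T$ is a positive scalar multiple of an orthogonal transformation.
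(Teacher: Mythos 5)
Your treatment of parts $(iii)$--$(v)$ is correct. Part $(iii)$ follows the paper's own route (\cref{res:vertex_edge_transitive_cor} together with \cref{res:symmetries}). For $(iv)$ and $(v)$ you replace the paper's citation of \cite[Proposition 4.3]{winter2020symmetric} by a direct computation: vertex-transitivity under $\Aut(P)\subset\Ortho(\RR^d)$ gives $\|v_i\|=r$, edge-transitivity gives $\langle v_i,v_j\rangle=s$ constant over edges, and pairing the balancing relation $\sum_{j\in N(i)}v_j=\theta_2v_i$ (valid because $\theta_2$-spectral implies $\theta_2$-balanced) with $v_i$ yields $s/r^2=\theta_2/\deg(G_P)$, from which both \eqref{eq:circumradius} and \eqref{eq:dihedral_angle} follow. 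This is a correct, self-contained derivation of exactly what the cited proposition supplies, and is a genuine gain in self-containedness.

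For $(i)$ and $(ii)$ your logical architecture is reversed relative to the paper's, and it has a gap. The paper proves $(ii)$ first and directly: by spectrality the edge-graph determines the combinatorial type (\cref{res:reconstruction}), and then Cauchy--Alexandrov rigidity (\cref{res:regular_rigid}) shows that combinatorially equivalent inscribed polytopes with constant edge length agree up to scale and orientation; part $(i)$ is then deduced from $(ii)$ by the rescaling trick $T_\alpha=\Id_{W_1}\oplus\,\alpha\Id_{W_2}$. You propose rigidity $\Rightarrow(i)\Rightarrow(ii)$. Two issues. First, your argument for $(i)$ bottoms out in ``contradicting the rigidity of such polytopes,'' which \emph{is} statement $(ii)$ --- so as written the two parts lean on each other circularly, and you still need an independent proof of one of them; the Alexandrov route to $(ii)$ is what actually closes the loop. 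Second, your Schur step asserts without justification that $T\,\Aut(P_{G_P}(\theta_2))\,T^{-1}\subseteq\Ortho(\RR^d)$: conjugation only shows these maps are \emph{linear} symmetries of $P$, not orthogonal ones. To repair this you must first show that the full linear automorphism group $\{S\in\GL(\RR^d):SP=P\}$ coincides with $\Aut(P)$ --- it is finite, hence preserves an averaged inner product $\langle B\,\cdot,\cdot\rangle$ with $B$ positive definite; $B$ then commutes with the irreducible orthogonal group $\Aut(P)$, so Schur gives $B=c\Id$ and the whole linear group is orthogonal. With that lemma your $(i)\Rightarrow(ii)$ implication does go through, but it becomes redundant once the rigidity section has delivered $(ii)$ directly.
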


It is worth emphasizing that large parts of \cref{res:edge_vertex_transitive} do not apply to polytopes of a weaker symmetry, as \eg\ vertex-transitive polytopes.
Prisms are counterexamples to both $(i)$ and $(ii)$.
There are vertex-transitive neighborly polytopes (other than simplices) and they are counterexamples to $(ii)$ and $(iii)$.

\begin{remark}
\label{rem:edge_not_vertex}
There are two edge-transitive polyhedra that are not vertex-transitive: the \emph{rhombic dodecahedron} and the \emph{rhombic triacontahedron} (see also \cref{fig:edge_transitive}).~Only the former is $\theta_2$-spectral, and the latter is not spectral for any eigenvalue (this was already mentioned in \cite{licata1986surprising}).
Since the rhombic dodecahedron is not vertex-transitive, nothing of this follows from \cref{res:vertex_edge_transitive_cor}.
However, this polytope satisfies the conditions of \cref{res:implies_spectral}, which seems purely accidental.
It is the only known spectral polytope that is not vertex-transitive.
\end{remark}

\subsection{Rigidity and irreducibility}
\label{sec:rigidity}

The goal of this section is to prove the missing part of \cref{res:edge_vertex_transitive}:

\begin{theorem}
\label{res:edge_transitive_rigid}
If $P\subset\RR^d$ is simultaneously vertex- and edge-transitive, then
\begin{myenumerate}
	\item $\Aut(P)\subset\Ortho(\RR^d)$ is irreducible as a matrix group, and
	\item $P$ is determined by its edge-graph up to scale and orientation.
	%\item $P$ is the $\theta_2$-eigenpolytope of its edge-graph up to scale and orientation.
\end{myenumerate}
\end{theorem}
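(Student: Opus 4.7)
The plan is to prove that claims (i) and (ii) are equivalent, and then to establish one of them directly.

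For the implication $(i)\Rightarrow(ii)$, I would apply Schur's lemma to the $\Aut(P)$-invariant symmetric matrix $\Psi\T\Psi=\sum_i v_iv_i\T$; invariance follows because each $T\in\Aut(P)$ is orthogonal and permutes the $v_i$. Under irreducibility, Schur forces $\Psi\T\Psi=c\Id$ for some $c>0$, so the columns of $\Psi/\sqrt c$ form an orthonormal basis of $\Eig_{G_P}(\theta_2)$ and $\Psi/\sqrt c$ is therefore an eigenpolytope matrix for $P_{G_P}(\theta_2)$. Combined with \cref{res:vertex_edge_transitive_cor} and \cref{res:reconstruction}, this identifies $P$ as a positive scalar multiple of an orthogonal image of the eigenpolytope $P_{G_P}(\theta_2)$, which is (ii). Conversely, for $(ii)\Rightarrow(i)$, suppose toward a contradiction that a non-trivial orthogonal $\Aut(P)$-invariant decomposition $\RR^d=U\oplus U^\perp$ exists. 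Then the linear maps $T_\lambda:=\lambda\Id_U\oplus\Id_{U^\perp}$ commute with every element of $\Aut(P)\subset\Ortho(\RR^d)$, so each $P_\lambda:=T_\lambda P$ inherits $\Aut(P)$ as a group of orthogonal symmetries and is vertex- and edge-transitive with edge-graph isomorphic to $G_P$. For $\lambda\neq 1$ the map $T_\lambda$ has two distinct positive eigenvalues and is not a similarity, so the $P_\lambda$ are pairwise non-similar --- a continuous family of witnesses violating (ii).

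It therefore suffices to prove one of the two claims by independent means. The cleanest route is to establish (ii) via Alexandrov's rigidity theorem combined with induction on $d$. For $d=2$ an edge-transitive polygon inscribed in a common circumsphere is automatically a regular polygon, uniquely determined by its number of vertices up to scale and orientation. For $d\ge 3$, I would show that each facet $F$ of $P$ is itself a vertex- and edge-transitive polytope of dimension $d-1$; the induction hypothesis then determines $F$ up to scale and orientation from its edge-graph (a subgraph of $G_P$), and because edge-transitivity of $P$ equalises all edge lengths, the scale of each facet is also pinned down. The intrinsic metric on $\partial P$ becomes a function solely of $G_P$, and Alexandrov's rigidity theorem yields uniqueness of $P$ up to rigid motion --- hence up to scale and orientation after normalising the circumradius.

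The main obstacle will be justifying that the facets of $P$ inherit both vertex- and edge-transitivity: this does not follow automatically from global transitivity of $\Aut(P)$, since the facet-stabilizer in $\Aut(P)$ need not act transitively on a facet's vertices or edges. One way forward is to invoke the structural classification of \cite{winter2020polytopes} cited at the start of this section (which constrains the interaction of vertex- and edge-transitivity across dimensions) together with the known classifications of edge-transitive polyhedra for the base case $d=3$. An alternative, more spectral route would bypass facet induction entirely and aim to prove (i) directly by exploiting the rigid form $X=\alpha\Id+\beta A$ of the Izmestiev matrix and its $d$-dimensional kernel from \cref{res:izmestiev}, to force $\Psi\T\Psi$ to be a scalar multiple of $\Id$.
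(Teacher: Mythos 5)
Your overall architecture is close to the paper's: the paper proves (ii) directly via Alexandrov's theorem plus an induction on dimension, and then derives (i) from (ii) by exactly the stretching argument you give ($T_\lambda$ acting as $\lambda\Id$ on one invariant summand). Your additional direction $(i)\Rightarrow(ii)$ via Schur's lemma applied to $\Psi\T\Psi=\sum_i v_iv_i\T$ is correct and is a genuinely different observation (the paper never uses it), but it is logically superfluous here, since everything still hinges on establishing (ii) independently.

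That is where the gap sits, and you have correctly diagnosed it yourself: facets of a vertex- and edge-transitive polytope need not be vertex- or edge-transitive, so an induction whose hypothesis is \enquote{vertex- and edge-transitive} does not descend to facets. The resolution --- and this is what the paper does --- is to induct on a \emph{weaker, hereditary} pair of properties instead: (a) all vertices on a common sphere and (b) all edges of equal length. Both pass to facets for free (facets of inscribed polytopes are inscribed, and edges of facets are edges of $P$), so the induction closes without any appeal to the facet stabilizer, to the classification in \cite{winter2020polytopes}, or to the list of edge-transitive polyhedra. Concretely, one first uses \cref{res:vertex_edge_transitive_cor} together with \cref{res:reconstruction} to pin down the combinatorial type of $P$ from $G_P$ (this step is needed before you can speak of \enquote{the edge-graph of a facet as a subgraph of $G_P$}), and then proves the purely metric statement: any two combinatorially equivalent polytopes that are both inscribed and equilateral are similar. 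The base case $d=2$ is the regular polygon as you say; for $d\ge 3$ the induction hypothesis makes corresponding facets congruent (after normalising the circumradius), and \cref{res:alexandrov} finishes. With the hypothesis weakened in this way, your proof goes through; as written, the facet-transitivity obstacle you flag is real and the speculative workarounds you offer do not close it.
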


To prove \cref{res:edge_transitive_rigid}, we make use of \emph{Cauchy's rigidity theorem} for polyhedra (with its beautiful proof listed in \cite[Section 12]{aigner2010proofs}).
It states that every~polyhedron is uniquely determined by its combinatorial type and the shape of its faces.
This was generalized by Alexandrov to general dimensions $d\ge 3$ (proven \eg\ in \cite[Theorem 27.2]{pak2010lectures}):

\begin{theorem}[Alexandrov]
\label{res:alexandrov}
Let $P_1,P_2\subset\RR^d, d\ge 3$ be two polytopes, so that
\begin{myenumerate}
	\item $P_1$ and $P_2$ are combinatorially equivalent via a face lattice~isomorphism~$\phi:\F(P_1)\to\F(P_2)$, and
	\item each facet $\sigma\in\F_{d-1}(P_1)$ is congruent to the facet $\phi(\sigma)\in\F_{d-1}(P_2)$.
\end{myenumerate}
Then $P_1$ and $P_2$ are congruent, \ie\ are the same up to orientation.
\end{theorem}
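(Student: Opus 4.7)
The plan is to prove Alexandrov's theorem by induction on dimension $d$, taking Cauchy's rigidity theorem for $d=3$ as the base case. The inductive hypothesis will not actually be needed on the full statement, but rather on a corollary: in dimension $d-1$, a convex polytope is determined up to congruence by its combinatorial type together with the congruence classes of its facets and the dihedral angles at its ridges. This is implicit once we reduce the problem as below.

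The first reduction is to show that it suffices to verify that corresponding dihedral angles at ridges (that is, $(d-2)$-faces) agree. Indeed, given the combinatorial type of $P_1$, the congruence class of each facet, and the dihedral angle at each ridge, one can reconstruct $P_1$ up to orientation: fix one facet, isometrically place each neighboring facet along its shared ridge at the prescribed dihedral angle, and continue to propagate across the facet-adjacency graph. Connectedness and the consistency forced by convexity guarantee that this procedure terminates in a unique polytope (up to reflection). Hence the theorem reduces to: under hypotheses $(i)$ and $(ii)$, the dihedral angle at every ridge $\tau \in \F_{d-2}(P_1)$ equals that at $\phi(\tau) \in \F_{d-2}(P_2)$.

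To establish this, I would follow the Cauchy sign-change strategy. Mark each ridge $\tau$ with $+$, $-$, or $0$ according to whether the dihedral angle at $\tau$ in $P_1$ exceeds, is less than, or equals that at $\phi(\tau)$ in $P_2$. Assume for contradiction that not all marks are $0$. At each $(d-3)$-face $\eta \in \F_{d-3}(P_1)$, the ridges incident to $\eta$ appear in a natural cyclic order coming from the link of $\eta$, which is combinatorially a polygon on the $2$-sphere cut out by the normal plane to $\eta$. A local Cauchy-Legendre arm-lemma, applied in this spherical link (and using that the two facets incident to each ridge at $\eta$ have prescribed congruent shapes in both $P_1$ and $P_2$), yields that the cyclic sequence of signs around $\eta$, if not identically zero, must change sign at least four times.

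The global step then counts total sign changes in two ways: summing the local lower bound over all $(d-3)$-faces incident to a nonzero ridge gives at least $4N$ sign changes, whereas a Cauchy-style global combinatorial lemma, derived from Euler's formula applied to the $2$-complex of nonzero-marked ridges (obtained by taking links down to codimension $2$), bounds the total number of sign changes strictly below $4N$. The contradiction forces all marks to be zero, completing the induction. The principal obstacle is the reduction of the higher-dimensional sign-counting to a planar combinatorial statement of the form to which Cauchy's lemma applies; this requires a careful passage through codimension-$2$ links and a verification that the arm-lemma remains valid when the two polytopes glued along a ridge are $(d-1)$-dimensional rather than planar, which is exactly the content of Alexandrov's strengthening of Cauchy's proof.
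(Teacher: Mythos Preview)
The paper does not give its own proof of this statement: Theorem~\ref{res:alexandrov} is quoted as a classical result of Alexandrov (the higher-dimensional extension of Cauchy's rigidity theorem) and the reader is referred to \cite[Theorem 27.2]{pak2010lectures}. So there is no in-paper argument to compare against; your sketch should be judged on its own merits.

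Your outline follows the standard Cauchy--Alexandrov route: reduce to equality of dihedral angles at ridges, label ridges by the sign of the angle difference, apply an arm-lemma locally in the spherical link of each $(d-3)$-face to force at least four sign changes per cycle, and derive a contradiction via a global count. This is correct in spirit and is essentially how the classical proof proceeds (and in particular specializes to Cauchy's proof when $d=3$, where $(d-3)$-faces are vertices). One quibble: the induction you announce at the outset is not actually used in your argument as written; the local--global sign-change scheme works directly in dimension $d$ without invoking the theorem in dimension $d-1$, so you may as well drop the inductive framing or else make the inductive step explicit.

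The genuine soft spot is the global step. In $d=3$ the sign-change count is bounded via Euler's formula on a planar graph, and this is where the contradiction comes from. For $d\ge 4$ the incidence structure of ridges and $(d-3)$-faces is no longer a planar graph, and you cannot simply invoke ``a Cauchy-style global combinatorial lemma'' without saying what replaces planarity. You flag this as the principal obstacle but do not resolve it. The standard fix (as in Pak's notes) is to pass to the vertex figure: at each vertex $v$ the link is a spherical $(d-1)$-polytope, and the facet-congruence hypothesis on $P_1,P_2$ descends to the links, so one can apply the theorem inductively there to conclude equality of dihedral angles at all ridges through $v$. That is where the induction on $d$ actually enters. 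Without this (or an equivalent device), your global counting step is a gap rather than a sketch.
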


\begin{proposition}
\label{res:regular_rigid}
Let $P_1,P_2\subset\RR^d$ be two combinatorially equivalent polytopes,~each of which has
\begin{myenumerate}
	\item all vertices on a common sphere (\ie\ is inscribed), and
	\item all edges of the same length $\ell_i$.
\end{myenumerate}
Then $P_1$ and $P_2$ are the same up to scale and orientation.
\begin{proof}[Proof.\!\!]\footnote{This proof was proposed by the user \emph{Fedor Petrov} on MathOverflow \cite{petrovMO}.}
W.l.o.g.\ assume that $P_1$ and $P_2$ have the same circumradius, otherwise~re\-scale $P_2$.
It then suffices to show that $P_1$ and $P_2$ are the same up to orientation.

We proceed with induction by the dimension $d$.
The induction base is given by $d=2$, which is trivial, since any two inscribed polygons with constant edge length are regular and thus completely determined (up to scale and orientation) by their number of vertices.

Suppose now that $P_1$ and $P_2$ are combinatorially equivalent polytopes of dimension $d\ge 3$ that satisfy $(i)$ and $(ii)$.
Let $\phi$ be the face lattice isomorphism between them.
Let $\sigma\in\F_{d-1}(P_1)$ be a facet of $P_1$, and $\phi(\sigma)$ the corresponding facet in $P_2$.
In particular, $\sigma$ and $\phi(\sigma)$ are combinatorially equivalent.
Furthermore, both $\sigma$ and $\phi(\sigma)$ are of dimension $d-1$ and satisfy $(i)$ and $(ii)$. This is obvious for $(ii)$, and for $(i)$ recall that facets of inscribed polytopes are also inscribed.
By induction hypothesis, $\sigma$ and $\phi(\sigma)$ are then congruent.
Since this holds for all facets $\sigma\in\F_{d-1}(P_1)$, \cref{res:alexandrov} tells us that $P_1$ and $P_2$ are congruent, that is, the same up to orientation.
\end{proof}
\end{proposition}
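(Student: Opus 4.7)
The plan is to prove \cref{res:regular_rigid} by induction on the dimension $d$, using Alexandrov's rigidity theorem \cref{res:alexandrov} as the engine that promotes facet-wise congruence to full congruence. The key structural observation driving the induction is that both hypotheses---being inscribed and having all edges of one common length---are inherited by every facet of an inscribed polytope, so exactly the same hypotheses will be available one dimension lower.

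First I would normalize by rescaling $P_2$ so that $P_1$ and $P_2$ share a common circumradius; after this reduction it suffices to prove they are congruent. The base case $d=2$ is essentially immediate: an inscribed polygon with all edges of equal length is forced to be regular, and a regular polygon is determined up to orientation by its number of vertices and its circumradius, so combinatorial equivalence of $P_1$ and $P_2$ already yields congruence.

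For the inductive step $d\ge 3$, I would fix a face lattice isomorphism $\phi\:\F(P_1)\to\F(P_2)$ and consider any facet $\sigma\in\F_{d-1}(P_1)$ together with its image $\phi(\sigma)\in\F_{d-1}(P_2)$. These are combinatorially equivalent polytopes of dimension $d-1$; they are inscribed, since facets of inscribed polytopes are inscribed in the intersection of the circumscribed sphere with the facet's affine hull; and they carry the same constant edge length $\ell$ inherited from their parents. The induction hypothesis then asserts that $\sigma$ and $\phi(\sigma)$ agree up to scale and orientation, and the common value of $\ell$ pins the scale factor to $1$, so the corresponding facets are actually congruent. Since this holds for every facet, \cref{res:alexandrov} applied to $P_1$ and $P_2$ yields that $P_1$ and $P_2$ are congruent, completing the induction.

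The main subtlety I anticipate is precisely this scale-factor bookkeeping: the inductive hypothesis only supplies facet-wise equality \emph{up to scale}, while \cref{res:alexandrov} demands genuine congruence of facets, so one has to use the common edge length to upgrade the former to the latter before invoking Alexandrov. Apart from this small but essential point, the argument is a clean induction on top of the heavy machinery of \cref{res:alexandrov}, and I would not expect any serious obstacle.
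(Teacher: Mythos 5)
Your proof is correct and follows essentially the same route as the paper's: induction on $d$ with base case the regular polygons, facets inheriting hypotheses $(i)$ and $(ii)$, and Alexandrov's theorem (\cref{res:alexandrov}) promoting facet-wise congruence to congruence of the polytopes. Your explicit attention to upgrading the inductive ``up to scale'' conclusion to genuine congruence of facets is a point the paper's write-up glosses over; just note that after normalizing the circumradii the two edge lengths $\ell_1,\ell_2$ are not \emph{a priori} equal, so it is cleaner to observe that the scale factor between corresponding facets is the constant $\ell_2/\ell_1$ for every facet, apply Alexandrov to $P_1$ and $(\ell_1/\ell_2)P_2$, and then use the common circumradius to force $\ell_1=\ell_2$.
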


We can now prove the main theorem of this section:

\begin{proof}[Proof of \cref{res:edge_transitive_rigid}]
By \cref{res:edge_vertex_transitive} the combinatorial type of $P$ is determined by its edge-graph.
By vertex-transitivity, all vertices are on a sphere.
By edge-transitivity, all edges are of the same length.
We can then apply \cref{res:regular_rigid} to obtain that $P$ is unique up to scale and orientation.
This proves $(ii)$.

Suppose now, that $\Aut(P)$ is not irreducible, but that $\RR^d$ decomposes as $\RR^d=W_1\oplus W_2$ into non-trivial orthogonal $\Aut(P)$-invariant subspaces.
Let $T_\alpha\in\GL(\RR^d)$ be the linear map that acts as identity on $W_1$, but as $\alpha\Id$ on $W_2$ for some $\alpha >1$.
Then $T_\alpha P$ is a non-orthogonal linear transformation of $P$ (in particular, combinatorially equivalent), on which $\Aut(P)$ still acts vertex- and edge-transitively.
By $(ii)$, this cannot be. Hence $\Aut(P)$ must be irreducible, which proves $(i)$.
\end{proof}

\subsection{A word on classification}
\label{sec:classification}

Despite the simple appearance of the definition of an edge-transitive polytope, %(one should have already thought about them, right?), 
no classification was obtained so far.

There exists a classification of the 3-dimension edge-transitive polyhedra: besides the Platonic solids, these are the ones shown in \cref{fig:edge_transitive} (nine in total).

\begin{figure}[h!]
\includegraphics[width=0.75\textwidth]{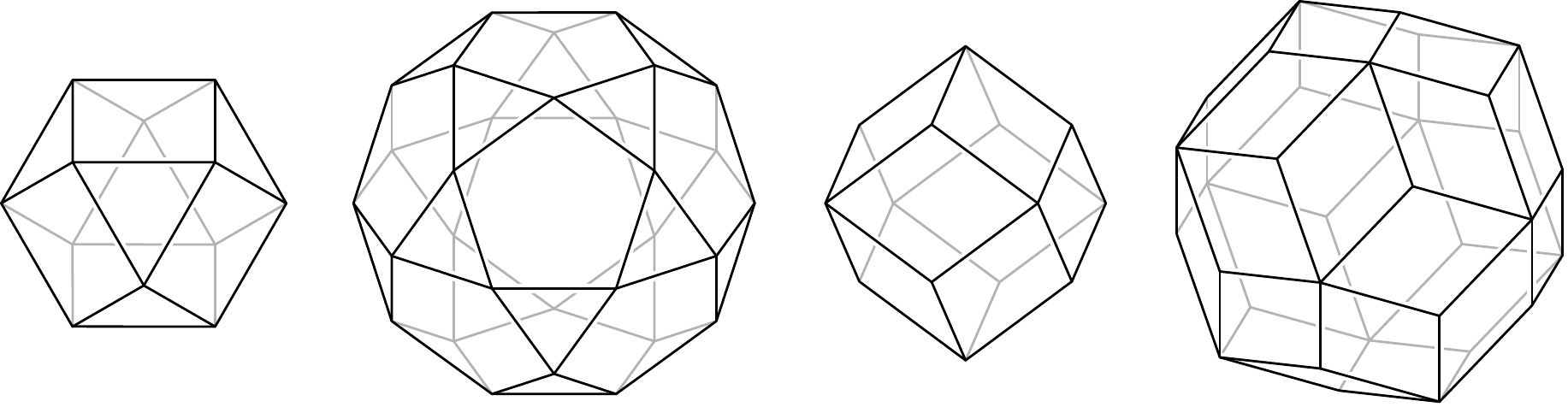}
\caption{From left to right, these are: the cuboctahedron, the icosido\-decahedron, the rhombic dodecahedron, and the rhombic triacontahedron.}
\label{fig:edge_transitive}
\end{figure}

There are many known edge-transitive polytopes in dimension $d\ge 4$ (so we~are not talking about a class as restricted as the regular polytopes).
There are 15 known edge-transitive 4-polytopes (and an infinite family of duoprisms\footnote{The $(n,m)$-duoprism is the cartesian product of a regular $n$-gon and a regular $m$-gon. Those are edge-transitive if and only of $n=m$. Technically, the 4-cube is the $(4,4)$-duoprism but is usually not counted as such, because of its exceptionally large symmetry group.}), but already here, no classification is known.
It is known that the number of irreducible\footnote{Being not the cartesian product of lower dimensional edge-transitive polytopes.} edge-transitive polytopes grows at least linearly with the number of dimensions.
For example, there are $\lfloor d/2\rfloor$ \emph{hyper-simplices} in dimension $d$.
These are edge-transitive (even distance-transitive, see \cref{sec:distance_transitive}).

It is the hope of the author, that the classification of the edge-transitive polytopes can be obtained using their spectral properties.
Their classification can now be stated purely as a problem in spectral graph theory: 
the classification of the edge-transitive polytopes (in dimension $d\ge 4$) is equivalent to the classification of $\theta_2$-spectral edge-transitive graphs, and since \cref{res:spectral_2}, we have a completely graph theoretic characterization of spectral graphs. %, and the classification can be attacked from this perspective.

%To see this, note the following equivalent definition of \enquote{spectral graph}, not making explicit reference to polytopes:

%\begin{lemma}
%\label{res:spectral_2}
%A graph is $\theta$-spectral if and only if it satisfies both:
%%
%\begin{myenumerate}
%	\item for every vertrex $i\in V$ there is a $\theta$-eigenvector $u=(u_1,...,u_n)\in\RR^n$ with
%%
%$$\Argmax_{k\in V} u_k = \{i\}.$$
%	\item for all distinct vertices $i,j\in V$ we have that $ij\in E$ is an edge of $G$ if and only if there is a $\theta$-eigenvector $u=(u_1,...,u_n)\in\RR^n$ with
%%
%$$\Argmax_{k\in V} u_k = \{i,j\}.$$
%\end{myenumerate}
%\end{lemma}

%\begin{definition}
%\label{def:spectral_2}
%A graph $G$ is said to be \emph{$\theta$-spectral} for some eigenvalue $\theta\in\Spec(G)$, if $ij\in E$ is an edge if and only if there is a $\theta$-eigenvector $u=(u_1,...,u_n)\in\RR^n$ with
%%
%$$\Argmax_{k\in V} u_k = \{i,j\}.$$
%\end{definition}

%\begin{corollary}
%Let $G$ be an edge-transitive $\theta_k$-spectral graph.
%\begin{myenumerate}
%	\item If $G$ is vertex-transitive, then $k=2$.
%	\item If $G$ is not vertex-transitive, then $k=2$ and $G$ is the edge-graph of the rhombic dodecahedron (see \cref{fig:edge_transitive}).
%\end{myenumerate}
%\end{corollary}

\begin{theorem}
\label{res:edge_transitive_spectral_graph}
Let $G$ be an edge-transitive graph. If $G$ is $\theta_k$-spectral, then
\begin{myenumerate}
	\item $k=2$, and
	\item if $G$ is \ul{not} vertex-transitive, then $G$ is the edge-graph of the rhombic dodecahedron (see \cref{fig:edge_transitive}).
\end{myenumerate}
\begin{proof}
We first prove $(ii)$.
As shown in \cite{winter2020polytopes} all edge-transitive polytopes in dimen\-sion $d\ge 4$ are vertex-transitive.
If $G$ is edge-transitive, not vertex-transitive and $\theta_k$-spectral, then its $\theta_k$-eigenpolytope is also edge-transitive but not vertex-transitive, hence of dimension $d\le 3$.
One checks that the 2-dimensional spectral polytopes are regular polygons, hence vertex-transitive.
The remaining polytopes are polyhedra, and we mentioned in \cref{rem:edge_not_vertex} that among these, only the rhombic dodecahedron is spectral, in fact $\theta_2$-spectral.
This proves $(ii)$.

Equivalently, if $G$ is vertex- and edge-transitive, then so is its eigenpolytope.
By \cref{res:vertex_edge_transitive_cor} this is a $\theta_2$-eigenpolytope.
Together with part $(ii)$, we find $k=2$~in~all cases, which proves $(i)$.
\end{proof}
\end{theorem}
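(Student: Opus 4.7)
The plan is to exploit the tight correspondence between a $\theta_k$-spectral graph $G$ and its $\theta_k$-eigenpolytope $P := P_G(\theta_k)$. By \cref{res:naive_graph} the eigenpolytope map $\phi$ is a graph isomorphism between $G$ and the skeleton of $P$, and by \cref{res:symmetries} this isomorphism transports combinatorial symmetries to Euclidean symmetries in a way that preserves the permutation action on vertices and edges. Consequently, vertex- and edge-transitivity pass back and forth between $G$ and $P$ without loss.

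A preliminary observation that will streamline both parts: a polytope of dimension $d \ge 2$ is spectral for at most one eigenvalue. Indeed, the column span of its arrangement matrix $\Psi$ is a single $d$-dimensional subspace of $\RR^n$, whereas eigenspaces of the adjacency matrix corresponding to distinct eigenvalues are mutually orthogonal, so they can coincide with $\Span\Psi$ for at most one value of $\theta$. In particular, once we show that some spectral polytope is $\theta_2$-spectral, any other $\theta_k$-spectrality of the same polytope forces $k=2$.

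I would then prove (ii) first. Assume $G$ is edge-transitive but not vertex-transitive; by the correspondence above, $P$ is also edge-transitive but not vertex-transitive. The external input from \cite{winter2020polytopes} forces such a polytope to live in dimension $d \le 3$. In dimension $2$, the edge-transitive polytopes are regular polygons and hence vertex-transitive, so $P$ must be a polyhedron. Among the edge-transitive polyhedra (\cref{fig:edge_transitive}), the only ones that fail to be vertex-transitive are the rhombic dodecahedron and the rhombic triacontahedron, and \cref{rem:edge_not_vertex} records that only the rhombic dodecahedron is spectral. Therefore $G$ is the edge-graph of the rhombic dodecahedron.

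For (i), I would split on whether $G$ is vertex-transitive. If it is, then $P$ is simultaneously vertex- and edge-transitive, so \cref{res:vertex_edge_transitive_cor} makes $P$ $\theta_2$-spectral; by the uniqueness observation, $\theta_k = \theta_2$. If $G$ is not vertex-transitive, part (ii) identifies $G$ as the edge-graph of the rhombic dodecahedron, which is $\theta_2$-spectral (and, by uniqueness, spectral for no other eigenvalue), again giving $k=2$. The genuine work here has been offloaded onto two external inputs: the dimensional restriction from \cite{winter2020polytopes} and the case-checked identification of spectral edge-transitive, non-vertex-transitive polyhedra from \cref{rem:edge_not_vertex}. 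Once these are granted, the remainder is a near-formality, with the main subtlety being the uniqueness of the spectral eigenvalue, which must be noted explicitly to close the argument.
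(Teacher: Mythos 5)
Your proof follows the paper's argument almost step for step: transfer vertex- and edge-transitivity between $G$ and $P_G(\theta_k)$, invoke \cite{winter2020polytopes} to force $d\le 3$ in the non-vertex-transitive case, dispose of dimension $2$, identify the rhombic dodecahedron via \cref{rem:edge_not_vertex}, and settle $(i)$ with \cref{res:vertex_edge_transitive_cor}. Your explicit preliminary observation that a polytope of dimension $d\ge 2$ can be $\theta$-spectral for at most one $\theta$ (because $\Span\Psi$ is a fixed nonzero subspace and eigenspaces to distinct eigenvalues meet only in $0$) is a welcome addition: the paper leaves exactly this step implicit when it concludes $k=2$ from the coexistence of $\theta_k$- and $\theta_2$-spectrality of the same eigenpolytope.

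There is, however, one step that fails as written: you claim that \enquote{in dimension $2$, the edge-transitive polytopes are regular polygons and hence vertex-transitive.} That is false; as the paper itself notes at the start of \cref{sec:edge_transitive}, there is a continuous family of edge-transitive $2n$-gons (e.g.\ non-square rhombi) that are \emph{not} vertex-transitive, so edge-transitivity alone does not eliminate the planar case. What you need --- and what the paper actually invokes --- is that the $2$-dimensional \emph{spectral} polytopes are regular polygons (the eigenpolytope of a cycle at an eigenvalue of multiplicity two is a regular polygon), hence vertex-transitive, contradicting the assumed failure of vertex-transitivity. With that substitution the planar case is correctly excluded and the remainder of your argument is sound.
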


%\begin{corollary}
%Let $G$ be an edge-transitive graph. If $G$ is $\theta_k$-spectral, then $k=2$. If in addition it is \emph{not} vertex-transitive, then $G$ is the edge-graph of the rhombic dodecahedron (see \cref{fig:edge_transitive}).
%\end{corollary}

\subsection{Arc- and half-transitive polytopes}
\label{sec:arc_transitive}

In a graph or polytope, an \emph{arc} is~an incident vertex-edge-pair.
A graph or polytope is called \emph{arc-transitive} if its symmetry group acts transitively on the arcs.
Being arc-transitive implies both, being vertex-transitive, and being edge-transitive.
In addition to that, in an arc-transitive graph, every edge can be mapped, not only onto every other edge, but also onto itself with flipped orientation.

There exist graphs that are simul\-taneously vertex- and edge-transitive, but not arc-transitive.
Those are called \emph{half-transitive} graphs, and are comparatively rare.
The smallest one has $27$ vertices and is known as the \emph{Holt graph} (see \cite{bouwer1970vertex,holt1981graph}).

For polytopes on the other hand, it is unknown whether there eixsts a distinction being arc-transitive and being simultaneously vertex- and edge-transitive.
No \emph{half-transitive polytope} is known.
Because of \cref{res:edge_vertex_transitive} $(i)$, we know that the edge-graph of a half-transitive polytope must itself be half-transitive.
Since such graphs are rare, the existence of half-transitive polytopes seems unlikely.

\begin{example}
The Holt graph is not the edge-graph of a half-transitive polytope: the Holt graph is of degree four, and its second-largest eigenvalue is of multiplicity six, giving rise to a 6-dimensional $\theta_2$-eigenpolytope. 
But a 6-dimensional polytope must have an edge-graph of degree at least six, and so the Holt graph is not spectral.
\end{example}

The lack of examples of half-transitive polytopes means that all known edge-transitive polytopes in dimension $d\ge 4$ are in fact arc-transitive.
Likewise, a~classification of arc-transitive polytopes is not known.

\subsection{Distance-transitive polytopes}
\label{sec:distance_transitive}

Our previous results about edge-transitive polytopes already allow for a complete classification of a particular subclass, namely, the \emph{distance-transitive polytopes}, thereby also providing a list of examples of edge-transitive polytopes in higher dimensions.

The distance-transitive symmetry is usually only considered for graphs, and the distance-transitive graphs form a subclass of the distance-regular graphs.
The usual reference for these is the classic monograph by Brouwer, Cohen and Neumaier \cite{brouwer1989distance}.

For any two vertices $i,j\in V$ of a graph $G$, let $\dist(i,j)$ denote the graph-theoretic \emph{distance} between those vertices, that is, the length of the shortest path connecting them.
The \emph{diameter} $\diam(G)$ of $G$ is the largest distance between any two vertices in $G$.

\begin{definition}
A graph is called \emph{distance-transitive} if $\Aut(G)$ acts transitively on each of the sets
$$D_{\delta}:=\{(i,j)\in V\times V \mid \dist(i,j)=\delta\},\quad\text{for all $\delta\in\{0,...,\diam(G)\}$}.$$

Analogously, a polytope $P\subset\RR^d$ is said to be \emph{distance-transitive}, if its Euclidean symmetry group $\Aut(P)$ acts transitively on each of the sets 
$$D_{\delta}:=\{(v_i,v_j)\in \F_0(P)\times \F_0(P) \mid \dist(i,j)=\delta\},\quad\text{for all $\delta\in\{0,...,\diam(G_P)\}$}.$$
Note that the distance between the vertices is still measured along the edge-graph rather than via the Euclidean distance.
\end{definition}

Being arc-transitive is equivalent to being transitive on the set $D_1$.
Hence, distance-transitivity implies arc-transitivity, thus edge-transitivity.

By our considerations in the previous sections, we know that the \mbox{classification} of distance-transitive polytopes is equivalent~to the classification of the $\theta_2$-spectral distance-transitive graphs.
Those where classified by Godsil (see \cref{res:spectral_distance_regular_graphs}).

%By \cref{res:edge_vertex_transitive}, we known that the edge-graph $G_P$ of a distance-transitive polytope must be distance-transitive, and that the polytope is the $\theta_2$-eigenpolytope of $G_P$.
%%
%But Godsil already gave a classification of  $\theta_2$-spectral distance-regular graphs (see \cref{res:spectral_distance_regular_graphs}; all of which~turned out to be distance-transitive) allows a complete classification of distance-tran\-sitive polytopes.

In the following theorem we translated each such $\theta_2$-spectral distance-transitive graph into its respective eigenpolytope.
This gives a complete classification of the distance-transitive polytopes.

\begin{theorem}\label{res:distance_transitive_classification}
If $P\subset\RR^d$ is distance-transitive, then it is one of the following:
\begin{enumerate}[label=$(\text{\roman*}\,)$]
	\item a regular polygon $(d=2)$,
	\item the regular dodecahedron $(d=3)$,
	\item the regular icosahedron $(d=3)$,
	\item a cross-polytopes, that is, $\conv\{\pm e_1,...,\pm e_d\}$ where $\{e_1,...,e_d\}\subset\RR^d$ is the standard basis of $\RR^d$,
	\item a hyper-simplex $\Delta(d,k)$, that is, the convex hull of all vectors $v\in\{0,1\}^{d+1}$ with exactly $k$ 1-entries,
	\item a cartesian power of a regular simplex (also known as the Hamming polytopes; this includes regular simplices and hypercubes),
	\item a demi-cube, that is, the convex hull of all vectors $v\in\{-1,1\}^d$ with~an~even number of 1-entries,
	\item the $2_{21}$-polytope, also called Gosset-polytope $(d=6)$,
	\item the $3_{21}$-polytope, also called Schläfli-polytope $(d=7)$.
\end{enumerate}
The ordering of the polytopes in this list agrees with the ordering of graphs in the list in \cref{res:spectral_distance_regular_graphs}.
The latter two polytopes where first constructed by Gosset in \cite{gosset1900regular}.
\end{theorem}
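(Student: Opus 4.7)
The strategy is to reduce the classification to Godsil's theorem (\cref{res:spectral_distance_regular_graphs}) and then identify each graph on that list with its $\theta_2$-eigenpolytope.

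First I would verify that $G_P$ is forced into Godsil's list. Since $P$ is distance-transitive, $\Aut(P)$ acts transitively on $D_0$ and on $D_1$, so $P$ is vertex- and edge-transitive. By \cref{res:vertex_edge_transitive_cor}, $P$ is then $\theta_2$-spectral. By \cref{res:symmetries}~(i), the combinatorial symmetries of $G_P$ are realized by Euclidean symmetries of $P$ via $\Aut(G_P) \cong \Aut(P)$, and this isomorphism is compatible with the action on vertices. Consequently the distance-transitive action of $\Aut(P)$ on $P$ transfers to a distance-transitive action of $\Aut(G_P)$ on $G_P$, so $G_P$ is a $\theta_2$-spectral distance-transitive (hence distance-regular) graph. \cref{res:spectral_distance_regular_graphs} leaves only the nine listed possibilities for $G_P$.

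Next, by \cref{res:edge_transitive_rigid}~(ii), $P$ is determined up to scale and orientation by $G_P$, so it remains to identify, for each of the nine graphs, which polytope arises as its $\theta_2$-eigenpolytope. Most cases are direct: the cycle $C_n$ gives the regular $n$-gon (its $\theta_2$-eigenspace is spanned by $\cos$ and $\sin$ of $2\pi k/n$); the dodecahedral and icosahedral edge-graphs reproduce the two polyhedra (as in \cref{sec:example}); the cocktail-party graph (complement of a perfect matching) has $\theta_2$-eigenvectors that embed the $2n$ vertices as $\pm e_1,\dots,\pm e_n$, giving the cross-polytope; for the Johnson graph $J(n,k)$ the embedding sending a $k$-subset $S$ to its indicator vector spans the $\theta_2$-eigenspace and produces the hyper-simplex $\Delta(n-1,k)$; the Hamming graph $H(d,q)$ yields the $d$-fold Cartesian power of a regular $(q-1)$-simplex, as established in Mohri's work cited in \cref{sec:eigenpolytopes}; and the halved cube yields the demi-cube by the analogous computation on even-weight $\{\pm 1\}$-vectors.

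The main obstacle is the two exceptional cases, the Schläfli graph (on $27$ vertices) and the Gosset graph (on $56$ vertices), where the target polytopes are the $E_6$- and $E_7$-polytopes $2_{21}$ and $3_{21}$. The cleanest way to close these cases is to argue in the opposite direction: the polytopes $2_{21}$ and $3_{21}$ are classically known to be distance-transitive, so by the argument above their edge-graphs are $\theta_2$-spectral distance-transitive graphs and must themselves appear in Godsil's list. Matching vertex counts forces $2_{21}$ to correspond to the Schläfli graph and $3_{21}$ to the Gosset graph; \cref{res:edge_transitive_rigid}~(ii) then upgrades this combinatorial matching to identification up to scale and orientation. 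This case analysis, together with the preceding reduction, completes the classification in the ordering announced in the statement.
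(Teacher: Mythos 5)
Your proposal is correct and follows essentially the same route as the paper: distance-transitivity gives vertex- and edge-transitivity, hence $\theta_2$-spectrality via \cref{res:vertex_edge_transitive_cor}, so the edge-graph lands in Godsil's list (\cref{res:spectral_distance_regular_graphs}), and each listed graph is then translated into its $\theta_2$-eigenpolytope (the paper leaves this translation implicit, and you do not even need \cref{res:symmetries} to transfer distance-transitivity to $G_P$, since Euclidean symmetries of $P$ trivially induce combinatorial symmetries of the edge-graph). The only small looseness is in the exceptional cases: vertex counts alone do not single out the Schläfli and Gosset graphs (e.g.\ $C_{27}$, $K_{27}$ and $H(3,3)$ also have $27$ vertices), so you should match valencies as well, or simply invoke the classical fact that the skeleta of $2_{21}$ and $3_{21}$ are the Schläfli and Gosset graphs.
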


We observe that the list in \cref{res:distance_transitive_classification} contains many polytopes that are not regular, and contains all regular polytopes excluding the 4-dimensional exceptions, the 24-cell, 120-cell and 600-cell.
The distance-transitive polytopes thus form a distinct class of remarkably symmetric polytopes which is not immediately related to the class of regular polytopes.

Another noteworthy observation is that all the distance-transitive polytopes are \emph{Wythoffian poly\-topes}, that is, they are orbit polytopes of finite reflection groups.
\Cref{fig:distance_transitive_Coxeter} shows the Coxeter-Dynkin diagrams of these polytopes.

\begin{figure}
\centering
\includegraphics[width=0.9\textwidth]{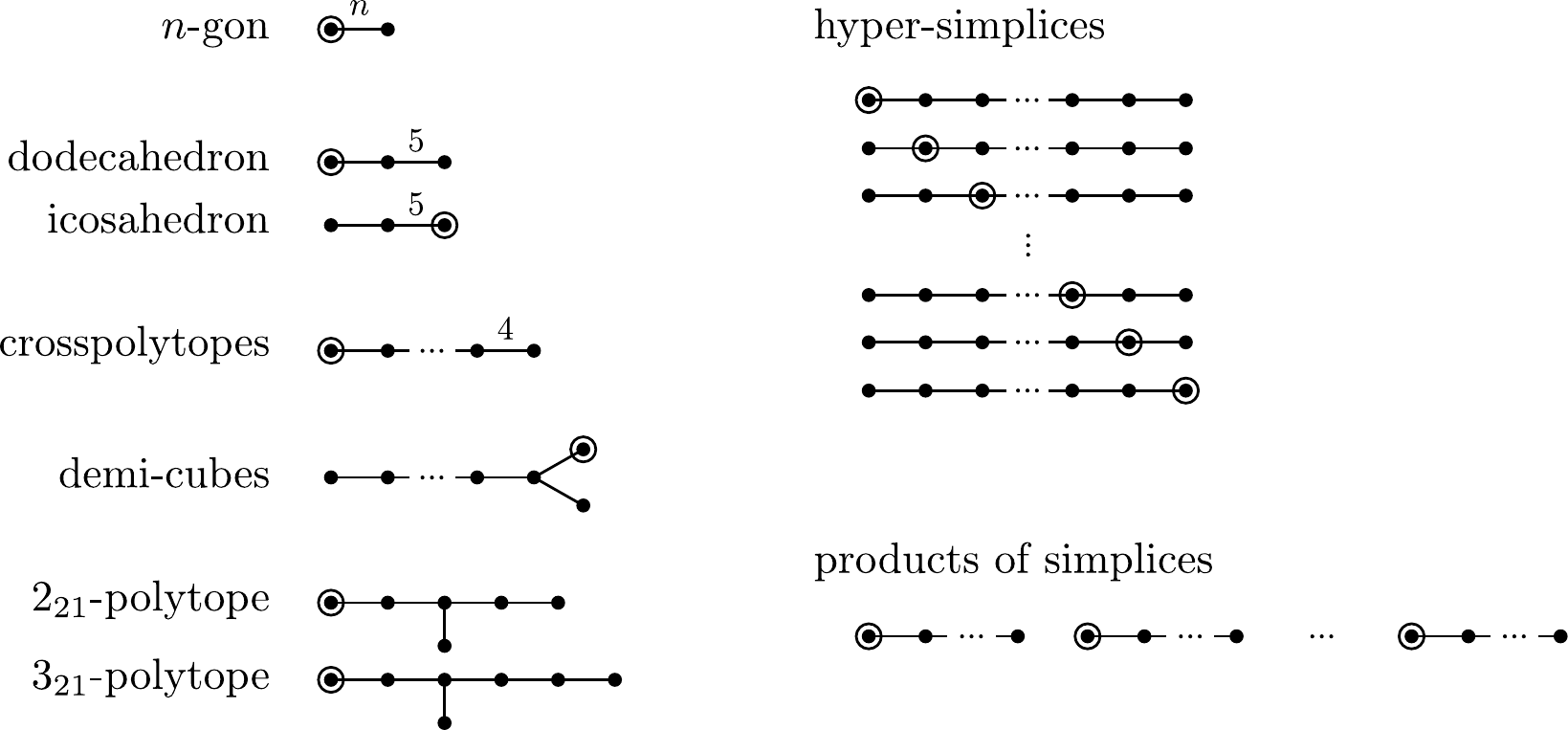}
\caption{Coxeter-Dynkin diagrams of distance-transitive polytopes.}
\label{fig:distance_transitive_Coxeter}
\end{figure}

\tempnewpage
\section{Conclusion and open questions}
\label{sec:future}

In this paper we have studied \emph{eigenpolytopes} and \emph{spectral polytopes}.
The former are polytopes constructed from a graph and one of its eigenvalues.
A polytope is spectral if it is the eigenpolytopes of its edge-graph.
These are of interest because spectral graph theory then ensures a strong interplay between the combinatorial properties of the edge-graph and the geometric properties of the polytope.

The study of eigenpolytopes and spectral polytopes has left us with many open questions.
Most notably, how to detect spectral polytopes purely from their geome\-try.
We introduced a tool (\cref{res:implies_spectral}), which was sufficient to proof that (most) edge-transitive polytopes are spectral.
We do not know how much more general it can be applied.

\begin{question}
\label{q:characterization}
Does \cref{res:implies_spectral} already characterize $\theta_2$-spectral polytopes (or even spectral polytopes in general)?
\end{question}

If the answer is affirmative, this would provide a geometric characterization of polytopes that are otherwise defined purely in terms of spectral graph theory.
The result of Izmestiev suggests that polytopes with sufficiently regular geometry are $\theta_2$-spectral: the entry of the matrix $X$ in \cref{res:izmestiev} at index $ij\in E$ can be~expressed as
$$X_{ij}=\frac{\vol(\sigma_i\cap\sigma_j)}{\|v_i\|\|v_j\|\sin\angle(v_i,v_j)},$$
where $\sigma_i$ and $\sigma_j$ are the facets of the polar dual $P^\circ$ that correspond to the vertices $v_i,v_j\in\F_0(P)$.
Because of this formula, it might be actually easier to classify the polar duals of $\theta_2$-spectral polytopes.

An affirmative answer to \cref{q:characterization} would also mean a negative answer to the following:

\begin{question}
\label{q:not_theta_2}
Is there a $\theta_k$-spectral polytope/graph for some $k\not=2$?
\end{question}

The answer is known to be negative for edge-transitive polytopes/graphs (see \cref{res:edge_transitive_spectral_graph}), but unknown in general.

The second-largest eigenvalue $\theta_2$ is special for other reasons too.
Even if a graph is not $\theta_2$-spectral, it seems to still imprint its adjacency information onto the edge-graph of its $\theta_2$-eigenpolytope.

\begin{question}
\label{q:realizing_edges}
Given an edge $ij\in E$ of $G$, if $v_i$ and $v_j$ (as defined in \cref{def:eigenpolytope}) are distinct vertices of the $\theta_2$-eigenpolytope $P_G(\theta_2)$, is then also $\conv\{v_i,v_j\}$ an edge of $P_G(\theta_2)$?
\end{question}

This was proven for distance-regular graphs in \cite{godsil1998eigenpolytopes}, and is not necessarily true for eigenvalues other than $\theta_2$.

All known spectral polytopes are exceptionally symmetric.
It is unclear whether this is true in general.

\begin{question}
\label{q:trivial_symmetry}
Are there spectral polytopes with trivial symmetry group?
\end{question}

An example for \cref{q:trivial_symmetry} must be asymmetric, yet with a reasonably large eigenspaces.
Such graphs exist among the distance-regular graphs, but all spectral distance-regular graphs were determined in \cite{godsil1998eigenpolytopes} (see also \cref{res:spectral_distance_regular_graphs}) and turned out to be distance-transitive, \ie\ highly symmetric.

A clear connection between being spectral and being symmetric is missing.
To emphasize our ignorance, we ask the following:

\begin{question}
\label{q:spectral_non_vertex_transitive}
Can we find more spectral polytopes that are \emph{not vertex-transitive}?
What characterizes them?
\end{question}

The single known spectral polytope that is \emph{not} vertex-transitive is the \emph{rhombic dodecahedron} (see \cref{fig:edge_transitive}).
The fact that it is spectral appears purely accidental, as there seems to be no reason for it to be spectral, except that we can explicitly check that it is. For comparison, the highly related \emph{rhombic triacontahedron} is not spectral.

On the other hand, vertex-transitive spectral polytopes might be quite common.

\begin{question}
\label{q:specific_instance}
Let $P\subset\RR^d$ be a polytope with the following properties:
\begin{myenumerate}
	\item $P$ is vertex-transitive,
	\item $P$ realizes all the symmetries of its edge-graph, and
	\item $\Aut(P)$ is irreducible.
\end{myenumerate}
Is $P$ (combinatorially equivalent to) a spectral polytope? %Or at least an eigenpolytope?
\end{question}

No condition in \cref{q:specific_instance} can be dropped.
If we drop vertex-transitivity,~we could take some polytope whose edge-graph has trivial symmetry and only small eigenspaces. Dropping $(ii)$ leaves vertex-transitive neighborly polytopes, for which we know that these are mostly not spectral (except for the simplex).
Dropping $(iii)$ leaves us with the prisms and anti-prisms, the eigenspaces of their edge-graphs are rarely of dimension greater than two.

%\cref{q:specific_instance} has equally interesting consequences if it is only true for the eigenpolytope case.
%For example, several polytopes with relevance in integer optimization seem to be eigenpolytopes, \eg\ the Birkhoff polytope or the traveling salesman polytopes.
%This would allow a formulation of the respective optimization problem in the language of spectral graph theory.
%Also, this gives a way to study the symmetry groups of these polytopes (which are oftem much larger than expected from their definition).

Finally, we wonder whether these spectral techniques can be any help in classifying the edge-transitive polytopes.

\begin{question}
Can we classify the edge-transitive graphs that are spectral, and~by this, the edge-transitive polytopes?
\end{question}

\begin{question}
Can the existence of half-transitive polytopes be excluded by using spectral graph theory (see \cref{sec:arc_transitive})?
\end{question}

\par\bigskip
\parindent 0pt
\textbf{Acknowledgements.} The author gratefully acknowledges the support by the funding of the European Union and the Free State of Saxony (ESF).

%%%%%%%%%%%%%%%%%%%%%%%%%%%%%%%%%%%%%%%%%%%%%%%%%%%%%%%%%%%%%%%%%%%%%%%%%%%%%%%%%%

\bibliographystyle{abbrv}
\bibliography{literature}

\newpage

\appendix

\section{Implementation in Mathematica}
\label{sec:appendix_mathematica}

The following short Mathematica script takes as input a graph $G$ (in the example below, this is the edge-graph of the dodecahedron), and an index $k$ of an eigenvalue.
It then compute the $v_i$ (or \texttt{vert} in the code), \ie\ the vertex-coordinates of the $\theta_k$-eigenpolytope.
If the dimension turns out to be appropriate, the spectral embedding of the graph, as well as the eigenpolytope are plotted.

\vspace{0.5em}
\begin{lstlisting}
(* Input: 
  * the graph G, and
  * the index k of an eigenvalue (k = 1 being the largest eigenvalue).
*)
G = GraphData["DodecahedralGraph"];
k = 2;

(* Computation of vertex coordinates 'vert' *)
n = VertexCount[G];
A = AdjacencyMatrix[G];
eval = Tally[Sort@Eigenvalues[A//N], Round[#1-#2,0.00001]==0 &];
d = eval[[-k,2]]; (* dimension of the eigenpolytope *)
vert = Transpose@Orthogonalize@
  NullSpace[eval[[-k,1]] * IdentityMatrix[n] - A];

(* Output: 
  * the graph G, 
  * its eigenvalues with multiplicities, 
  * the spectral embedding, and
  * its convex hull (the eigenpolytope).
*)
G
Grid[Join[{{$\theta$,"mult"}}, eval], Frame$\to$All]
Which[
  d<2 , Print["Dimension too low, no plot generated."],
  d==2, GraphPlot[G, VertexCoordinates$\to$vert],
  d==3, GraphPlot3D[G, VertexCoordinates$\to$vert,
  d>3 , Print["Dimension too high, 3-dimensional projection is plotted."];
    GraphPlot3D[G, VertexCoordinates$\to$vert[[;;,1;;3]] ]
]
If[d==2 || d==3,
  Region`Mesh`MergeCells[ConvexHullMesh[vert]]
]
\end{lstlisting}

\end{document}